\documentclass[a4paper,11pt]{amsart}%
\pdfoutput=1
\usepackage{hyperref}
\usepackage{amsfonts}
\usepackage{amsmath}
\usepackage{amssymb}
\usepackage{graphicx}
\usepackage{verbatim}%
\setcounter{MaxMatrixCols}{30}
\newtheorem{theorem}{Theorem}
\newtheorem{lemma}[theorem]{Lemma}
\newtheorem{proposition}[theorem]{Proposition}
\theoremstyle{definition}

\newtheorem{example}[theorem]{Example}

\newtheorem{remark}[theorem]{Remark}

\numberwithin{equation}{section}
\theoremstyle{plain}

\setlength{\topmargin}{-0.3in}
\setlength{\textheight}{9.74in}
\setlength{\oddsidemargin}{0.4in}
\setlength{\evensidemargin}{0.4in}
\setlength{\textwidth}{5.65in}
\begin{document}
\title[Betti bounds for successive stellars]{Bounds for the Betti numbers of successive stellar subdivisions of a simplex}
\author{Janko B\"ohm}
\address{Janko B\"ohm\\
Department of Mathematics\\
University of Kaiserslautern\\
Erwin - Schr\"odinger - Str.\\
67663 Kaiserslautern\\
Germany}
\email{boehm@mathematik.uni-kl.de}
\author{Stavros Argyrios Papadakis}
\address{Centro de An\'{a}lise Matem\'{a}tica, Geometria e Sistemas Din\^{a}micos,
Departamento de Matem\'atica, Instituto Superior T\'ecnico, Universidade
T\'ecnica de Lisboa, Av. Rovisco Pais, 1049-001 Lisboa, Portugal}
\email{papadak@math.ist.utl.pt}
\thanks{S. P. was supported by the Portuguese Funda\c{c}\~ao para a Ci\^encia e a
Tecno\-lo\-gia through Grant SFRH/BPD/22846/2005 of POCI2010/FEDER and through
Project PTDC/MAT/099275/2008. He also benefited from a one month visit to
Technische Universit\"at Kaiserslautern financially supported by TU Kaiserslautern.}
\subjclass[2010]{ Primary 13F55; Secondary 13D02, 13P20, 13H10.}

\begin{abstract}
We give a bound for the Betti numbers of the Stanley-Reisner ring of a stellar
subdivision of a Gorenstein* simplicial complex by applying unprojection
theory. From this we derive a bound for the Betti numbers of iterated stellar
subdivisions of the boundary complex of a simplex. The bound depends only on
the number of subdivisions, and we construct examples which prove that it is sharp.

\end{abstract}
\maketitle

\section{Introduction}

Consider the class of simplicial complexes obtained from the boundary complex
of a simplex with $q+1$ vertices by any sequence of $c-1$ stellar
subdivisions. We give bounds for the (total) Betti numbers of the minimal
resolution of the associated Stanley-Reisner rings. The bounds depend only on
$c$ and not on $q$. Our main tool is the relation of stellar subdivisions of
Gorenstein* simplicial complexes with the Kustin-Miller complex construction
obtained in \cite{BP}, which gives an easy way to control the Betti numbers of
a stellar subdivision. By constructing a specific class of examples, we prove
that for fixed $c$ our bounds are attained for $q$ sufficiently large.

There are bounds in the literature for various classes of simplicial
complexes. If we only subdivide facets starting from a simplex the process
will yield a stacked polytope. In this case, there is an explicit formula for
the Betti numbers due to Terai and Hibi \cite{TH}. See also \cite{CK} for a
combinatorial proof, and \cite[Theorem 3.3]{HM}, \cite{BP} for the
construction of the resolutions. In \cite[Theorem 2.1, Proposition 3.4]{HM},
Herzog and Li Marzi consider bounds for a more general class than Gorenstein,
leading to a less sharp bound in our setting. Migliore and Nagel discuss in
\cite[Proposition 9.5]{MN} a bound for fixed $h$-vector. The bounds of
R\"{o}mer \cite{Roemer} apply for arbitrary ideals with a fixed number of
generators and linear resolution.

To state our results, for $c\geq1$ and $q\geq2$ denote by $\mathcal{D}_{q,c}$
the set of simplicial complexes $D$ on $q+c$ vertices which are obtained by
$c-1$ iterated stellar subdivisions of faces of positive dimension, starting
from the boundary complex of a $q$-simplex. If $k$ is any field, we denote by
$k[D]$ the Stanley-Reisner ring of $D$. Note, that $k[D]$ is the quotient of a
polynomial ring by a codimension $c$ Gorenstein ideal. Define inductively
$l_{c}=(l_{c,0},l_{c,1},\dots,l_{c,c})\in\mathbb{Z}^{c+1}$ by $l_{1}=\left(
1,1\right)  $ and
\[
l_{c}=2\left(  l_{c-1},0\right)  +2\left(  0,l_{c-1}\right)  -\left(
1,1,0,...,0\right)  -\left(  0,...,0,1,1\right)  \in\mathbb{Z}^{c+1}%
\]
for $c\geq2$. For example $l_{2}=2\left(  1,1,0\right)  +2\left(
0,1,1\right)  -\left(  1,1,0\right)  -\left(  0,1,1\right)  =(1,2,1)$,
$l_{3}=(1,5,5,1)$, and $l_{4}=(1,11,20,11,1)$. The main result of the paper is
the following theorem giving an upper bound for the Betti numbers of $k[D]$
for $D\in\mathcal{D}_{q,c}$. The bound follows immediately from the stronger
Theorem \ref{thmBettiBoundWithLink}, and that it is sharp from
Proposition~\ref{prop!ChampionsProposition}.

\begin{theorem}
\label{thm!bettiBoundWeakVersion}Suppose $c\geq1,q\geq2$ and $D\in
\mathcal{D}_{q,c}$. Then for the Betti numbers of $k[D]$ it holds that%
\begin{equation}
b_{i}(k[D])\leq l_{c,i} \label{equ!maintheorem}%
\end{equation}
for all $0\leq i\leq c$. Moreover, the bound is sharp in the following sense:
Given $c\geq1$, there exists $q\geq2$ and $F\in\mathcal{D}_{q,c}$ with
$b_{i}(k[F])=l_{c,i}$ for all $0\leq i\leq c$.
\end{theorem}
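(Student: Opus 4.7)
My plan is to proceed by induction on the codimension $c$. The base case $c=1$ is immediate: any $D \in \mathcal{D}_{q,1}$ is the boundary of the $q$-simplex, whose Stanley--Reisner ideal is generated by the single squarefree monomial $x_1 \cdots x_{q+1}$, so $b_0(k[D]) = b_1(k[D]) = 1$, matching $l_1 = (1,1)$.

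For the inductive step, write $D \in \mathcal{D}_{q,c}$ with $c \geq 2$ as a stellar subdivision of some positive-dimensional face $\sigma$ in a complex $D' \in \mathcal{D}_{q,c-1}$. The cited result of \cite{BP} identifies $k[D]$ as the Kustin--Miller unprojection of $k[D']$ with respect to an ideal determined by $\link_{D'}(\sigma)$. Consequently, the Kustin--Miller complex construction produces a (not necessarily minimal) free resolution of $k[D]$ built from the minimal free resolution of $k[D']$ and that of the Stanley--Reisner ring $k[L]$ of the link-data; its $i$-th term has rank at most
\[
b_i(k[D']) + b_{i-1}(k[D']) + b_i(k[L]) + b_{i-1}(k[L]),
\]
with corrections at the extremes ($i$ near $0$ or $c$) forced by the Gorenstein condition $b_0 = b_c = 1$. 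Plugging the inductive bounds $b_j \leq l_{c-1,j}$ into the four terms on the right recovers exactly the recursion $l_c = 2(l_{c-1},0) + 2(0,l_{c-1}) - (1,1,0,\ldots,0) - (0,\ldots,0,1,1)$.

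The main obstacle is that $k[L]$ need not itself be the Stanley--Reisner ring of a complex in some $\mathcal{D}_{q',c-1}$: the link $\link_{D'}(\sigma)$ is Gorenstein* but of codimension possibly strictly smaller than $c-1$, and therefore the induction cannot close on Theorem~\ref{thm!bettiBoundWeakVersion} alone. This is precisely the role of the stronger Theorem~\ref{thmBettiBoundWithLink}, whose hypothesis is set up to track the Betti numbers of $k[D]$ together with those of the rings attached to all the links that arise in the subdivision process; once the statement is strengthened in this way, the Kustin--Miller estimate above closes the induction.

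For the sharpness part, Proposition~\ref{prop!ChampionsProposition} must produce, for each fixed $c$, a concrete sequence of $c-1$ stellar subdivisions starting from the boundary of a $q$-simplex (with $q$ sufficiently large) whose resulting complex $F \in \mathcal{D}_{q,c}$ achieves $b_i(k[F]) = l_{c,i}$ for every $i$. The natural strategy is to choose each successive subdivision so that no cancellation occurs in the associated Kustin--Miller complex at that step, which one arranges by requiring $q$ to be large enough that the relevant links remain sufficiently generic throughout.
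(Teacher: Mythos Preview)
Your proposal matches the paper's approach: the bound is deduced from the stronger Theorem~\ref{thmBettiBoundWithLink} (induction on $c$ over the enlarged class containing both $I_D$ and the colon ideals $(I_D:x_\sigma)$, with the Kustin--Miller complex supplying the recursive estimate), and sharpness from Proposition~\ref{prop!ChampionsProposition}. Two small corrections are worth recording. First, your stated reason for why the induction fails to close on $\mathcal{D}_{q,c}$ alone is not quite right: the link $\link_{D'}(\sigma)$ for $D'\in\mathcal{D}_{q,c-1}$ actually has the \emph{same} codimension $c-1$ as $D'$ (it is still Gorenstein* of the correct codimension); the obstruction is rather that it need not be obtainable as iterated stellar subdivisions of a simplex boundary, so it simply does not lie in any $\mathcal{D}_{q',c-1}$. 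Second, once you pass to the enlarged class you must also handle the inductive step for the colon ideals themselves, and this requires the combinatorial Lemma~\ref{Lem_stellar_link} (links of a stellar subdivision are either stellars of links, links, or joins of links with a simplex boundary) feeding into Proposition~\ref{prop!bettiForStellarsVersion2}; your sketch mentions only the Kustin--Miller estimate for $I_D$ and leaves this second half of the induction implicit. For sharpness, the paper's construction is more delicate than ``sufficiently generic'': the faces $\sigma_t$ are chosen with prescribed pairwise intersections so that every minimal non-face meets $\sigma_t$ (forcing minimality of the Kustin--Miller complex via Proposition~\ref{prop!about_minimality_with_multidegrees}) and so that the colon ideal $(I_{F_t}:x_{\sigma_t})$ has the \emph{same} Betti numbers as $I_{F_t}$ (forcing the factor $2$ in the recursion to be attained).
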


In Section \ref{sect!bound on the betti numbers} we focus on bounding the
Betti numbers of stellar subdivisions. The first result is Proposition
\ref{prop!bettiForStellarsVersion1}, which gives a bound for the Betti numbers
of the Stanley-Reisner ring of a stellar subdivision of a Gorenstein*
simplicial complex $D$ with respect to a face $\tau$ in terms of those of $D$
and of the link of $\tau$. The proof of this proposition uses Proposition
\ref{prop!stellar_for_generalised}, which is a generalization of \cite[Theorem
1.1]{BP}, and the Kustin-Miller complex construction (see \cite{KM} and
Section \ref{sec!kmcomplexconstrinstellar}). To prove Theorem
\ref{thm!bettiBoundWeakVersion} by induction on the codimension $c$, we have
to enlarge the class of complexes $\mathcal{D}_{q,c}$ by including also the
links of faces. We give a bound for their Betti numbers in Proposition
\ref{prop!bettiForStellarsVersion2}. According to the combinatorial Lemma
\ref{Lem_stellar_link} there are three types of links to consider.

Focussing on proving that the bound of Theorem \ref{thm!bettiBoundWeakVersion}
is sharp, we first analyze in Section \ref{sec!kmcomplexconstrinstellar} the
Kustin-Miller complex construction in the setting of stellar subdivisions. In
particular, we prove in Proposition
\ref{prop!about_minimality_with_multidegrees} a sufficient condition for the
minimality of the Kustin-Miller complex. In Section \ref{sect!champions} we
construct for any $c$ an element $F\in\mathcal{D}_{q,c}$ (for suitable $q$),
and using Proposition \ref{prop!about_minimality_with_multidegrees} we show
that the inequalities (\ref{equ!maintheorem}) are in fact equalities for $F$.
For an implementation of the construction see our package \textsc{BettiBounds}
\cite{BP4} for the computer algebra system \textsc{Macaulay2} \cite{GS}. Using
the minimality of the Kustin-Miller complex, we provide in the package a
function which efficiently produces the graded Betti numbers of the extremal
examples without the use of Gr\"{o}bner bases.

\section{Notation\label{sectnotation}}

For an ideal $I$ of a ring $R$ and $u\in R$ write $(I:u)=\{r\in R\bigm|ru\in
I\}$ for the ideal quotient. Denote by $\mathbb{N}$ the set of strictly
positive integer numbers. For $n\in\mathbb{N}$ we set $[n]=\{1,2,\dots,n\}$.
Assume $A\subset\mathbb{N}$ is a finite subset. We set $2^{A}$ to be the
simplex with vertex set $A$, by definition it is the set of all subsets of
$A$. A simplicial subcomplex $D\subset2^{A}$ is a subset with the property
that if $\tau\in D$ and $\sigma\subset\tau$ then $\sigma\in D$. The elements
of $D$ are also called faces of $D$, and the dimension of a face $\tau$ of $D$
is one less than the cardinality of $\tau$. We define the support of $D$ to
be
\[
\operatorname{supp}D=\{i\in A\bigm|\{i\}\in D\}\text{.}%
\]
We fix a field $k$. Denote by $R_{A}$ the polynomial ring $k[x_{a}\bigm|a\in
A]$ with the degrees of all variables $x_{a}$ equal to $1$. For a finitely
generated graded $R_{A}$-module $M$ we denote by $b_{i}(M)$ the $i$-th Betti
number of $M$, by definition $b_{i}(M)=\dim_{R_{A}/m}\operatorname{Tor}%
_{i}^{R_{A}}(R_{A}/m,M)$, where $m=(x_{a}\bigm|a\in A)$ is the maximal
homogeneous ideal of $R_{A}$. It is well-known that if we ignore shifts the
minimal graded free resolution of $M$ as $R_{A}$-module has the shape
\[
M\leftarrow R_{A}^{b_{0}(M)}\leftarrow R_{A}^{b_{1}(M)}\leftarrow R_{A}%
^{b_{2}(M)}\leftarrow\cdots
\]

For a simplicial subcomplex $D\subset2^{A}$ we define the Stanley-Reisner
ideal $I_{D,A}\subset R_{A}$ to be the ideal generated by the square free
monomials $x_{i_{1}}x_{i_{2}}\ldots x_{i_{p}}$ where $\{i_{1},i_{2}%
,\ldots,i_{p}\}$ is not a face of $D$. In particular, $I_{D,A}$ contains
linear terms if and only if $\operatorname{supp}D\not =A$. The Stanley-Reisner
ring $k[D,A]$ is defined by $k[D,A]=R_{A}/I_{D,A}$. Taking into account that
$\dim R_{A}=\#A$, we define the codimension of $k[D,A]$ by
$\operatorname{codim}k[D,A]=\#A-\dim k[D,A]$. For a nonempty face $\sigma$ of
$D$ we set $x_{\sigma}=\prod_{i\in\sigma}x_{i}\in k[D,A]$. We denote by
$b_{i}(k[D,A])$ the $i$-th Betti number of $k[D,A]$ considered as $R_{A}%
$-module. In the following, when the set $A$ is clear we will sometimes
simplify the notations $I_{D,A}$ to $I_{D}$ and $k[D,A]$ to $k[D]$. In some
situations, however, it will be convenient to consider Stanley-Reisner ideals
containing variables.

For a nonempty subset $A\subset\mathbb{N}$, we set $\partial A=2^{A}%
\setminus\{A\}\subset2^{A}$ to be the boundary complex of the simplex $2^{A}$.
For the Stanley-Reisner ring of $\partial A$ we have $k[\partial
A,A]=R_{A}/(\prod_{a\in A}x_{a})$.

Assume that, for $i=1,2$, $D_{i}\subset2^{A_{i}}$ is a subcomplex and the
subsets $A_{1},A_{2}$ of $\mathbb{N}$ are disjoint. By the join $D_{1}\ast
D_{2}$ of $D_{1}$ and $D_{2}$ we mean the subcomplex $D_{1}\ast D_{2}%
\subset2^{A_{1}\cup A_{2}}$ defined by
\[
D_{1}\ast D_{2}=\{\alpha_{1}\cup\alpha_{2}\bigm|\alpha_{1}\in D_{1},\alpha
_{2}\in D_{2}\}\text{.}%
\]
By \cite[p.~221, Exerc.~5.1.20]{BH} we have
\[
k[D_{1}\ast D_{2},A_{1}\cup A_{2}]=k[D_{1},A_{1}]\otimes_{k}k[D_{2}%
,A_{2}]\text{.}%
\]
As a consequence, using the well-known fact that the tensor product of the
minimal resolutions of two modules is a minimal resolution of the tensor
product of the modules we get that
\begin{equation}
b_{i}(k[D_{1}\ast D_{2}])=\sum_{t=0}^{i}b_{t}(k[D_{1}])b_{i-t}(k[D_{2}])
\label{eqn!betti_of_joint}%
\end{equation}
for all $i\geq0$.

If $\sigma$ is a face of $D\subset2^{A}$ define the link of $\sigma$ in $D$ to
be the subcomplex
\[
\operatorname*{lk}\nolimits_{D}\sigma=\{\alpha\in D\bigm|\alpha\cap
\sigma=\emptyset\text{ and }\alpha\cup\sigma\in D\}\subset2^{A\setminus\sigma
}\text{.}%
\]
It is clear that the Stanley-Reisner ideal of $\operatorname*{lk}%
\nolimits_{D}\sigma$ is equal to the intersection of the ideal $(I_{D,A}%
:x_{\sigma})$ with the subring $R_{A\setminus\sigma}$ of $R_{A}$. In other
words, it is the ideal of $R_{A\setminus\sigma}$ generated by the minimal
monomial generating set of $(I_{D,A}:x_{\sigma})$. Furthermore, define the
star of $\sigma$ in $D$ to be the subcomplex%
\[
\operatorname*{star}\nolimits_{D}\sigma=\{\alpha\in D\bigm|\alpha\cup\sigma\in
D\}\subset2^{A}\text{.}%
\]

If $\tau$ is a nonempty face of $D\subset2^{A}$ and $j\in\mathbb{N}\setminus
A$, we define the stellar subdivision $D_{\tau}$ with new vertex $j$ to be the
subcomplex
\[
D_{\tau}=\left(  D\setminus\operatorname*{star}\nolimits_{D}\tau\right)
\cup\left(  2^{\left\{  j\right\}  }\ast\operatorname*{lk}\nolimits_{D}%
\tau\ast\partial\tau\right)  \subset2^{A\cup\{j\}}\text{.}%
\]
Note that $D_{\tau}$ consists of the following faces:

\begin{enumerate}
\item All faces of $D$ which do not contain $\tau$.

\item For each face $\beta\in D$ with $\tau\subset\beta$ the faces
$(\beta\setminus\rho)\cup\{j\}$ for all nonempty subsets $\rho$ of $\tau$.
\end{enumerate}

\noindent It is easy to see that%
\begin{equation}
k[D_{\tau},2^{A\cup\{j\}}]=R_{A\cup\{j\}}/(I_{D,A},x_{\tau},x_{j}%
u\bigm|u\in(I_{D,A}:x_{\tau}))\text{.} \label{eqn!egns_of_stellar}%
\end{equation}

Following \cite[p.~67]{St}, we say that a subcomplex $D\subset2^{A}$ is
Gorenstein* over $k$ if $A=\operatorname{supp}D$, $k[D]$ is Gorenstein, and
for every $i\in A$ there exists $\sigma\in D$ with $\sigma\cup\{i\}$ not a
face of $D$. The last condition combinatorially means that $D$ is not a join
of the form $2^{\{i\}}\ast D_{1}$, and algebraically that $x_{i}$ divides at
least one element of the minimal monomial generating set of $I_{D,A}$. We say
that $D\subset2^{A}$ is generalized Gorenstein* over $k$ if $D\subset
2^{\operatorname{supp}D}$ is Gorenstein* over $k$. When there is no ambiguity
about the field $k$ we will just say Gorenstein* and generalized Gorenstein*.
It is well-known (cf.~ \cite[Section II.5]{St}) that if $D\subset2^{A}$ is
Gorenstein* and $\sigma\in D$ is a face then $\operatorname*{lk}%
\nolimits_{D}\sigma\subset2^{A\setminus\sigma}$ is also Gorenstein*. It
follows that if $D\subset2^{A}$ is generalized Gorenstein* and $\sigma\in D$
then $\operatorname*{lk}\nolimits_{D}\sigma\subset2^{A\setminus\sigma}$ is
also generalized Gorenstein*.

Recall also from \cite[Definition~1.2]{PR} that if $I=(f_{1},\dots
,f_{r})\subset R$ is a homogeneous codimension $1$ ideal of a graded
Gorenstein ring $R$ such that the quotient $R/I$ is Gorenstein, then there
exists $\psi\in\operatorname{Hom}_{R}(I,R)$ such that $\psi$ together with the
inclusion $I\hookrightarrow R$ generate $\operatorname{Hom}_{R}(I,R)$ as an
$R$-module. The Kustin--Miller unprojection ring of the pair $I\subset R$ is
defined as the quotient of $R[T]$ by the ideal generated by the elements
$Tf_{i}-\psi(f_{i})$, where $T$ is a new variable.

\section{Bounds for the Betti numbers of successive stellar
subdivisions\label{sect!bound on the betti numbers}}

The main result of this section is Theorem \ref{thmBettiBoundWithLink}, which
gives bounds for the Betti numbers of complexes in $\mathcal{D}_{q,c}$ and
links thereof.

In the following, let $D\subset2^{A}$ be a generalized Gorenstein* simplicial
complex, $\tau\in D$ a nonempty face and $D_{\tau}\subset2^{A\cup\{j\}}$ the
corresponding stellar subdivision with new vertex $j\in\mathbb{N}\setminus A$.
For simplicity set $\mathcal{R}=R_{A}[z]/(I_{D,A})=k[D,A][z]$, where $z$ is a
new variable.

In \cite{BP} we showed that a stellar subdivision of a face of a Gorenstein*
simplicial complex corresponds on the level of Stanley-Reisner rings to a
certain Kustin--Miller unprojection. In the following proposition we
generalize this statement for generalized Gorenstein* simplicial complexes.

\begin{proposition}
\label{prop!stellar_for_generalised}Assume that $\dim\tau\geq1$. Consider the
ideal $Q=(I_{D,A}:x_{\tau},$ $z)\subset R_{A}[z]$, and set
\[
M=\operatorname{Hom}_{\mathcal{R}}(Q/(I_{D,A}),\text{ }\mathcal{R})\text{.}%
\]
Then $M$ is generated, as $\mathcal{R}$-module, by the inclusion homomorphism
together with the map $\psi$ that sends $(I_{D,A}:x_{\tau})$ to $0$ and $z$ to
$x_{\tau}$. Denote by $S$ the Kustin--Miller unprojection ring of the pair
$Q/(I_{D,A})\subset\mathcal{R}$ associated to the map $\psi$. We have that $z$
is $S$-regular and $S/(z)\cong k[D_{\tau},A\cup\{j\}]$.
\end{proposition}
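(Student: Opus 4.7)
The plan is to reduce to the ordinary Gorenstein* case, which is covered by \cite[Theorem 1.1]{BP}. Set $A' = \supp D$ and $B = A \setminus A'$; by the definition of generalized Gorenstein*, $D \subset 2^{A'}$ is Gorenstein*. Since each $x_b$ for $b \in B$ is a linear generator of $I_{D,A}$, there is a canonical isomorphism $k[D,A] \cong k[D,A']$, which extends to an isomorphism $\mathcal{R} \cong \mathcal{R}'$ with $\mathcal{R}' := k[D,A'][z]$. Setting $Q' = (I_{D,A'} : x_\tau, z) \subset R_{A'}[z]$, I would first verify that under this isomorphism the ideal $Q/(I_{D,A})$ of $\mathcal{R}$ corresponds to $Q'/(I_{D,A'})$ of $\mathcal{R}'$: every $x_b$ lies in $I_{D,A} \subset (I_{D,A} : x_\tau)$ and hence becomes zero in $\mathcal{R}$, so the image of $Q$ in $\mathcal{R}$ is generated by the images of $(I_{D,A'} : x_\tau)$ and $z$.

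With this identification, $M$ is identified with $M' := \Hom_{\mathcal{R}'}(Q'/(I_{D,A'}), \mathcal{R}')$, and the map $\psi$ in the statement corresponds to the analogous map $\psi'$ on $Q'/(I_{D,A'})$ sending $(I_{D,A'}:x_\tau)$ to $0$ and $z$ to $x_\tau$. Applying \cite[Theorem 1.1]{BP} to the Gorenstein* complex $D \subset 2^{A'}$ with face $\tau$ directly yields that $M'$ is generated by the inclusion and $\psi'$, and that the corresponding unprojection ring $S'$ has $z$ as a regular element with $S'/(z) \cong k[D_\tau, A' \cup \{j\}]$. Since the Kustin--Miller unprojection ring is defined by the relations $Tf_i - \psi(f_i)$ on a generating set, it is preserved under the isomorphism of pairs, so both conclusions transfer to the ring $S$.

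To complete the identification $S/(z) \cong k[D_\tau, A \cup \{j\}]$, I would observe that $\tau \in D$ has all its vertices in $A'$, so the stellar subdivision $D_\tau$ lies in $2^{A' \cup \{j\}}$ and no element of $B$ is a vertex of $D_\tau$. Therefore $I_{D_\tau, A \cup \{j\}}$ contains each $x_b$ as a linear generator, and $k[D_\tau, A \cup \{j\}] = k[D_\tau, A' \cup \{j\}]$ by the same linear-forms argument, closing the reduction. The mathematical content resides entirely in \cite[Theorem 1.1]{BP}; the main obstacle is purely bookkeeping, namely carefully tracking the identifications so that the pair $(Q/(I_{D,A}), \psi)$ in the generalized setting is genuinely identified with the Gorenstein* pair $(Q'/(I_{D,A'}), \psi')$.
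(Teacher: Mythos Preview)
Your proposal is correct and follows essentially the same reduction as the paper: both isolate the extra variables $x_b$ for $b\in A\setminus\supp D$, note that they contribute only as linear generators, and thereby reduce to the Gorenstein* case handled by \cite[Theorem~1.1]{BP}. The only cosmetic difference is that the paper phrases this as ``the arguments in the proof of \cite[Theorem~1.1]{BP} also prove\ldots'' after writing $I_{D,A}=(I_{D,\supp D})+(P)$, $Q=(I_{D,\supp D}:x_\tau,z)+(P)$, and $I_{D_\tau,A\cup\{j\}}=(I_{D_\tau,\supp D\cup\{j\}})+(P)$, whereas you transfer the statement itself through the explicit isomorphism $\mathcal{R}\cong\mathcal{R}'$; your version is if anything slightly cleaner, since it invokes the theorem rather than its proof.
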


\begin{proof}
If $A=\operatorname{supp}D$ then the statement is \cite[Theorem 1.1(b)]{BP}.
Now assume that $\operatorname{supp}D$ is a proper subset of $A$. Consider
$P=\{x_{a}\bigm|a\in A\setminus\operatorname{supp}D\}\subset R_{A}$. We have
\[
I_{D,A}=(I_{D,\operatorname{supp}D})+(P),\quad Q=(I_{D,\operatorname{supp}%
D}:x_{\tau},z)+(P)
\]
and
\[
I_{D_{\tau},A\cup\{j\}}=(I_{D_{\tau},\operatorname{supp}D\cup\{j\}}%
)+(P)\text{.}%
\]
The arguments in the proof of \cite[Theorem 1.1]{BP} also prove that $M$ is
generated by the inclusion together with the map $\psi$ that sends
$(I_{D,A}:x_{\tau})$ to $0$ and $z$ to $x_{\tau}$. They also prove that $z$ is
$S$-regular and that $S/(z)\cong k[D_{\tau},A\cup\{j\}]$.
\end{proof}

We will now study the Betti numbers $b_{i}$ of $k[D_{\tau},A\cup\{j\}]$ as
$R_{A\cup\{j\}}$-module in terms of the Betti numbers of $k[D,A]$ as $R_{A}%
$-module and the Betti numbers of $k[\operatorname*{lk}\nolimits_{D}%
(\tau),A\setminus{\tau}]$ as $R_{A\setminus{\tau}}$-module.

\begin{proposition}
\label{prop!bettiForStellarsVersion1}Denote by $L=\operatorname*{lk}%
\nolimits_{D}(\tau)\subset2^{A\setminus\tau}$ the link of the face $\tau$ of
$D$. We then have
\[
b_{1}(k[D_{\tau}])\leq b_{1}(k[D])+b_{1}(k[L])+1
\]
and that, for $2\leq i\leq\operatorname{codim}k[D_{\tau}]-2$,
\[
b_{i}(k[D_{\tau}])\leq b_{i-1}(k[D])+b_{i}(k[D])+b_{i-1}(k[L])+b_{i}%
(k[L])\text{.}%
\]

\end{proposition}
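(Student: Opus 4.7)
The plan is to apply Proposition~\ref{prop!stellar_for_generalised} to reduce the problem to bounding the Betti numbers of the Kustin-Miller unprojection $S$ of $J = Q/(I_{D,A}) \subset \mathcal{R}$. That proposition gives $k[D_\tau, A\cup\{j\}] \cong S/(z)$ with $z$ an $S$-regular variable of the ambient polynomial ring $R_A[z][T]$ (here $T$ corresponds to the new vertex $x_j$); since $z$ is a variable of that polynomial ring, modding it out preserves ranks in the minimal free resolution, and $b_i(k[D_\tau, A\cup\{j\}]) = b_i(S)$. I will then bound $b_i(S)$ via the Kustin-Miller complex $K_\bullet$ (see \cite{KM} and Section~\ref{sec!kmcomplexconstrinstellar}), an explicit (generally non-minimal) $A'[T]$-free resolution of $S$, where $A' := R_A[z]$, built by combining the minimal $A'$-resolutions of $\mathcal{R}$ and $\mathcal{R}/J$ with the unprojection equations.

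The two input resolutions are easy to describe. Since $\mathcal{R} = k[D,A][z]$ is a polynomial extension of $k[D,A]$, one has $b_i(\mathcal{R}) = b_i(k[D])$ over $A'$. For $\mathcal{R}/J$, since $Q$ contains $z$, we obtain $A'/Q = R_A/(I_{D,A}:x_\tau)$; the identification $\operatorname{star}_D(\tau) = 2^\tau \ast L$ together with formula~\eqref{eqn!betti_of_joint} gives its $R_A$-Betti numbers as $b_i(k[L])$, and tensoring the minimal $R_A$-resolution with the Koszul complex on the regular element $z$ yields the $A'$-Betti numbers $b_i(\mathcal{R}/J) = b_i(k[L]) + b_{i-1}(k[L])$.

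A direct count of generators of the defining ideal $I_S \subset A'[T]$ yields $\rank K_1 \leq b_1(k[D]) + b_1(k[L]) + 1$, with the three summands corresponding respectively to the $b_1(k[D])$ minimal generators of $I_{D,A}$ (lifted to $A'[T]$), the unprojection equations $Tu_k - \psi(u_k) = Tu_k$ for the (at most) $b_1(k[L])$ minimal monomial generators $u_k$ of $(I_{D,A}:x_\tau)$, and the single extra unprojection equation $Tz - \psi(z) = Tz - x_\tau$ arising from the generator $z$ of $J$. A careful rank bookkeeping at higher degrees of $K_\bullet$---tracking both the syzygies of the two inputs and the cross syzygies induced by the unprojection equations (such as $u_k(Tz - x_\tau) - z(Tu_k) \in I_{D,A}$, which is ultimately responsible for the extra $b_{i-1}(k[D])$ summand)---yields $\rank K_i \leq b_{i-1}(k[D]) + b_i(k[D]) + b_{i-1}(k[L]) + b_i(k[L])$ for $2 \leq i \leq c-2$. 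The stated inequalities then follow from $b_i(S) \leq \rank K_i$.

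The main technical ingredient is the rank bookkeeping for $K_\bullet$ at higher homological degrees; the restriction $i \leq c-2$ reflects that near the top of the complex, Gorenstein self-duality of $S$ (of codimension $c$ in $R_A[z][T]$) alters the rank relations, so the symmetric expression $b_{i-1}(k[D]) + b_i(k[D]) + b_{i-1}(k[L]) + b_i(k[L])$ is no longer the correct estimate there.
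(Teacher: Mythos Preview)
Your approach is essentially the paper's: reduce to $b_i(S)$ via Proposition~\ref{prop!stellar_for_generalised}, compute the Betti numbers of the two inputs $\mathcal{R}$ and $\mathcal{R}/J$, and bound $b_i(S)$ by the ranks of the Kustin--Miller complex. Your computation of $b_i(R_A/(I_{D,A}:x_\tau))$ via $\operatorname{star}_D(\tau)=2^{\tau}\ast L$ is a pleasant variant of the paper's more direct observation that the minimal generators of $(I_{D,A}:x_\tau)$ do not involve the variables $x_a$ for $a\in\tau$.

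There is one genuine omission: Proposition~\ref{prop!stellar_for_generalised} assumes $\dim\tau\geq 1$, but the present proposition is stated for arbitrary nonempty $\tau$, and the $0$-dimensional case is actually used later (in Case~I of Proposition~\ref{prop!bettiForStellarsVersion2} the face $\tau\setminus\sigma$ may well be a single vertex). The paper treats $\dim\tau=0$ separately: if $\tau=\{i\}$ then $I_{D_\tau,A\cup\{j\}}=(G,x_i)$ where $G$ is obtained from the minimal generators of $I_{D,A}$ by the substitution $x_i\mapsto x_j$, whence $b_i(k[D_\tau])=b_{i-1}(k[D])+b_i(k[D])$ exactly, and the claimed inequalities follow trivially. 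You should add this short case.

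A secondary point: your ``careful rank bookkeeping'' paragraph is accurate in spirit but is really an appeal to the known structure of the Kustin--Miller complex rather than an argument. The paper simply cites \cite[Section~2]{BP3} for the fact that the $i$-th free module of $C_U$ (for $2\leq i\leq\operatorname{codim} k[D_\tau]-2$) has rank at most $b_{i-1}(\mathcal{R})+b_i(\mathcal{R}/J)+b_i(\mathcal{R})$; you should make the analogous citation explicit rather than leave the impression that the bookkeeping is being redone from scratch.
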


\begin{proof}
If $\dim\tau=0$, say $\tau=\{i\}$, then
\[
I_{D_{\tau},A\cup\{j\}}=(G,x_{i})\text{,}%
\]
where $G$ is the finite set obtained by substituting $x_{j}$ for $x_{i}$ in
the minimal monomial generating set of $I_{D,A}$. Hence%
\[
b_{i}(k[D_{\tau}])=b_{i-1}(k[D])+b_{i}(k[D])
\]
for all $i$.

Now assume that $\dim\tau\geq1$. Using the notations of
Proposition~\ref{prop!stellar_for_generalised}, we have that $S$ is the
Kustin--Miller unprojection of the pair $Q/(I_{D,A})\subset\mathcal{R}$ and
that $b_{i}(k[D_{\tau}])=b_{i}[S]$ for all $i$.

We denote by $C_{U}$ the graded free resolution of $S$ obtained by the
Kustin--Miller complex construction,
cf.~Section~\ref{sec!kmcomplexconstrinstellar} and \cite[Section~2]{BP3}, with
initial data the minimal graded free resolutions of $\mathcal{R}%
=R_{A}[z]/(I_{D,A})$ and $R_{A}[z]/Q$ over $R_{A}[z]$. Since $C_{U}$ is a
graded free resolution of $S$ we have $b_{i}[S]\leq b_{i}(C_{U})$ for all $i$,
where $b_{i}(C_{U})$ denotes the rank of the finitely generated free
$R_{A}[z]$-module $(C_{U})_{i}$. The variable $z$ does not appear in the
minimal monomial generating set of $I_{D,A}$, as a consequence $b_{i}%
(\mathcal{R})=b_{i}(k[D])$ for all $i$. Since $Q=(I_{D,A}:x_{\tau},z)$ and the
variable $z$ does not appear in the minimal generating set of $(I_{D,A}%
:x_{\tau})$ we have for all $i$
\begin{align}
\phantom{===}b_{i}(R_{A}[z]/Q)  &  =b_{i-1}(R_{A}/(I_{D,A}:x_{\tau}%
))+b_{i}(R_{A}/(I_{D,A}:x_{\tau}))\label{eqn!yutdg}\\
&  =b_{i-1}(k[L])+b_{i}(k[L]).\nonumber
\end{align}
Moreover, by the Kustin--Miller complex construction (see \cite[Section~2]%
{BP3}) we have
\[
b_{1}(C_{U})\leq b_{1}(k[D])+b_{1}(R_{A}[z]/Q)=b_{1}(k[D])+b_{1}(k[L])+1
\]
and, for $2\leq i\leq\operatorname{codim}k[D_{\tau}]-2$, that
\[
b_{i}(C_{U})\leq b_{i-1}(k[D])+b_{i}(R_{A}[z]/Q)+b_{i}(k[D]).
\]
Hence
\[
b_{i}(k[D_{\tau}])=b_{i}[S]\leq b_{i}(C_{U})\leq b_{i-1}(k[D])+b_{i}%
(R_{A}[z]/Q)+b_{i}(k[D])
\]
which combined with Equality(\ref{eqn!yutdg}) finishes the proof.
\end{proof}

\begin{remark}
It may be interesting to investigate, perhaps with the use of Hochster's
formula or a generalization of the Kustin-Miller complex technique, whether
the inequalities of Proposition \ref{prop!bettiForStellarsVersion1} hold in a
more general setting than Gorenstein*.
\end{remark}

For the proof of Proposition \ref{prop!bettiForStellarsVersion2} we will need
the following combinatorial lemma which relates a link of a stellar
subdivision with links of the original simplicial complex. The straightforward
but lengthy proof will be given in
Subsection~\ref{subs!proofofCombinatorialLemma}.

\begin{lemma}
\label{Lem_stellar_link}If $\sigma$ is a nonempty face of $D_{\tau}$ the
following hold:

\begin{enumerate}
\item (Case I) Assume $j\notin\sigma$ and $\tau\cup\sigma\in D$. Then
$\tau\setminus\sigma$ is a nonempty face of $\operatorname*{lk}\nolimits_{D}%
\sigma$ and
\[
\operatorname*{lk}\nolimits_{D_{\tau}}{\sigma}=(\operatorname*{lk}%
\nolimits_{D}\sigma)_{\tau\setminus\sigma}%
\]
that is, $\operatorname*{lk}\nolimits_{D_{\tau}}{\sigma}$ is the stellar
subdivision of $\operatorname*{lk}\nolimits_{D}\sigma$ with respect to
$\tau\setminus\sigma$.

\item (Case II) Assume that $j\notin\sigma$ and $\tau\cup\sigma\notin D$. Then
$\operatorname*{lk}\nolimits_{D_{\tau}}{\sigma}$ is equal to
$\operatorname*{lk}\nolimits_{D}\sigma$ considered as a subcomplex of
$2^{(A\cup\{j\})\setminus\sigma}$.

\item (Case III) Assume $j\in\sigma$. Then $\tau\cup\sigma\setminus\{j\}$ is a
face of $D$, $\tau\setminus\sigma$ is nonempty and
\[
\operatorname*{lk}\nolimits_{D_{\tau}}{\sigma}=\operatorname*{lk}%
\nolimits_{D}(\tau\cup\sigma\setminus\{j\})\ast\partial(\tau\setminus\sigma)
\]
that is, $\operatorname*{lk}\nolimits_{D_{\tau}}{\sigma}$ is equal to the join
of $\operatorname*{lk}\nolimits_{D}(\tau\cup\sigma\setminus\{j\})$ with
$\partial(\tau\setminus\sigma)$.
\end{enumerate}
\end{lemma}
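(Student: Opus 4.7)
The plan is to verify each of the three cases directly from the definitions, using the explicit description of the faces of $D_\tau$ given in the paper: namely, faces of $D_\tau$ are either of type (a), faces of $D$ not containing $\tau$, or of type (b), of the form $(\beta \setminus \rho) \cup \{j\}$ with $\tau \subset \beta \in D$ and $\emptyset \neq \rho \subset \tau$. Before diving into each case, I would dispatch the side assertions. If $j \notin \sigma$ then $\sigma$ is necessarily a face of $D$ not containing $\tau$, so in Case I the condition $\tau \cup \sigma \in D$ immediately gives $\tau \setminus \sigma \neq \emptyset$ and $(\tau \setminus \sigma) \cup \sigma = \tau \cup \sigma \in D$, whence $\tau \setminus \sigma \in \operatorname{lk}_D \sigma$. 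In Case III, writing $\sigma = (\beta \setminus \rho) \cup \{j\}$ as in type (b) gives $\sigma \setminus \{j\} = \beta \setminus \rho \subset \beta$ and $\rho \subset \tau \setminus \sigma$ (since $\rho$ is disjoint from $\beta \setminus \rho = \sigma \setminus \{j\}$ and $j \notin \tau$), so $\tau \cup (\sigma \setminus \{j\}) \subset \beta \in D$ and $\tau \setminus \sigma \supset \rho$ is nonempty.

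For Case I, I would take $\alpha \in \operatorname{lk}_{D_\tau} \sigma$ and split into $j \notin \alpha$ and $j \in \alpha$. In the first sub-case, $\alpha \cup \sigma$ is a type (a) face of $D_\tau$; translating "$\tau \not\subset \alpha \cup \sigma$" via $\alpha \cap \sigma = \emptyset$ into "$\tau \setminus \sigma \not\subset \alpha$" exhibits $\alpha$ as a face of $\operatorname{lk}_D \sigma$ not containing $\tau \setminus \sigma$, i.e.\ a type (a) face of $(\operatorname{lk}_D \sigma)_{\tau \setminus \sigma}$. In the second sub-case, write $\alpha \cup \sigma = (\beta \setminus \rho) \cup \{j\}$ and check that $\beta' := \beta \setminus \sigma$ lies in $\operatorname{lk}_D \sigma$ and contains $\tau \setminus \sigma$, while $\rho \subset \tau \setminus \sigma$ (using $\sigma \subset \beta \setminus \rho$, so $\rho \cap \sigma = \emptyset$), so that $\alpha = (\beta' \setminus \rho) \cup \{j\}$ is a type (b) face of $(\operatorname{lk}_D \sigma)_{\tau \setminus \sigma}$. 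The reverse inclusion reverses these identifications.

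For Case II, if $j \in \alpha \in \operatorname{lk}_{D_\tau} \sigma$ then $\alpha \cup \sigma = (\beta \setminus \rho) \cup \{j\}$ with $\sigma \subset \beta \setminus \rho \subset \beta$ and $\tau \subset \beta$, forcing $\tau \cup \sigma \subset \beta \in D$, contradicting the hypothesis. Hence every element of $\operatorname{lk}_{D_\tau}\sigma$ omits $j$ and is a face of $D$ containing $\sigma$; conversely, if $\alpha \in \operatorname{lk}_D \sigma$ then $\tau \not\subset \alpha \cup \sigma$ (else $\tau \cup \sigma \in D$), so $\alpha \cup \sigma$ is a type (a) face of $D_\tau$, giving the equality.

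For Case III, set $\sigma' = \sigma \setminus \{j\}$. Since $j \in \sigma$ forces $j \notin \alpha$, the element $\alpha \cup \sigma$ is a type (b) face of $D_\tau$; solving $\alpha \cup \sigma' = \beta \setminus \rho$ with $\tau \subset \beta$ and $\emptyset \neq \rho \subset \tau$ forces $\beta = \alpha \cup \sigma' \cup \tau$ and $\rho = \tau \setminus (\alpha \cup \sigma')$, with nonemptiness of $\rho$ equivalent to $\tau \not\subset \alpha \cup \sigma'$. Decomposing $\alpha = (\alpha \setminus \tau) \sqcup (\alpha \cap \tau)$, the component $\alpha \cap \tau$ lies in $\tau \setminus \sigma'$ (since $\alpha \cap \sigma' = \emptyset$) and is a \emph{proper} subset (else $\tau \subset \alpha \cup \sigma'$), so $\alpha \cap \tau \in \partial(\tau \setminus \sigma)$, while $\alpha \setminus \tau$ is disjoint from $\tau \cup \sigma'$ and $(\alpha \setminus \tau) \cup \tau \cup \sigma' \subset \beta \in D$, placing it in $\operatorname{lk}_D(\tau \cup \sigma')$. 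The reverse direction reassembles the join and checks the four conditions that $\alpha \cup \sigma$ is a type (b) face of $D_\tau$. The only real obstacle throughout is bookkeeping: one juggles the sets $\sigma$, $\tau$, $\alpha$, $\beta$, $\rho$ together with the special vertex $j$, so setting up the disjointness relations cleanly (and recording $j \notin A$, hence $j \notin \tau$) at the outset of each case is essential to keep the argument transparent.
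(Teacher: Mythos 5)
Your proposal is correct and follows essentially the same strategy as the paper's proof: identify the faces of $D_\tau$ as either faces of $D$ omitting $\tau$ or as elements $(\beta\setminus\rho)\cup\{j\}$ with $\tau\subset\beta\in D$, $\emptyset\neq\rho\subset\tau$, and then chase both inclusions case-by-case according to whether $j\in\alpha$; your decomposition in Case III into $\alpha\cap\tau$ and $\alpha\setminus\tau$ coincides with the paper's $\alpha\cap(\tau\setminus\sigma)$ and its complement since $\alpha\cap\sigma=\emptyset$. The only minor variation is in the converse direction of Case III, where you reconstruct the pair $(\beta,\rho)$ directly (with $\rho=(\tau\setminus\sigma)\setminus\alpha_1$), whereas the paper selects a single vertex $\gamma\in(\tau\setminus\sigma)\setminus\alpha_1$ and embeds $\alpha_1\cup\alpha_2\cup\sigma$ into the face $\operatorname{transf}(\beta_2,\{\gamma\})$; both are sound.
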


\begin{remark}
Case II corresponds to faces ${\sigma}$ of $D\setminus\operatorname*{star}%
\nolimits_{D}\tau$, while Cases I and III to faces of $2^{\left\{  j\right\}
}\ast\operatorname*{lk}\nolimits_{D}\tau\ast\partial\tau$.

\end{remark}

The next proposition gives bounds on the Betti numbers of links of a stellar
subdivision in terms of links of the original complex.

\begin{proposition}
\label{prop!bettiForStellarsVersion2} Let $\sigma$ be a face of $D_{\tau}$,
and set $L=\operatorname*{lk}\nolimits_{D_{\tau}}\sigma\subset2^{(A\cup
\{j\})\setminus\sigma}$.

\begin{enumerate}
\item (Case I) If $j\not \in \sigma$ and $\tau\cup\sigma$ is a face of $D$
then we have that
\[
b_{1}(k[L])\leq b_{1}(k[L_{1}])+b_{1}(k[L_{2}])+1
\]
and that for $2\leq i\leq\operatorname{codim}k[L]-2$%
\[
b_{i}(k[L])\leq b_{i-1}(k[L_{1}])+b_{i}(k[L_{1}])+b_{i-1}(k[L_{2}%
])+b_{i}(k[L_{2}])\text{,}%
\]
where $L_{1}=\operatorname*{lk}\nolimits_{D}\sigma\subset2^{A\setminus\sigma}$
and $L_{2}=\operatorname*{lk}\nolimits_{D}(\tau\cup\sigma)\subset
2^{A\setminus(\tau\cup\sigma)}$.

\item (Case II) If $j\not \in \sigma$ and $\tau\cup\sigma$ is not a face of
$D$ then we have that for all $i$%
\[
b_{i}(k[L])=b_{i-1}(k[L_{1}])+b_{i}(k[L_{1}])\text{.}%
\]

\item (Case III) Assume $j\in\sigma$. Then $\tau\cup\sigma\setminus\{j\}$ is a
face of $D$ and we have that for all $i$%
\[
b_{i}(k[L])=b_{i-1}(k[L_{3}])+b_{i}(k[L_{3}])\text{,}%
\]
where $L_{3}=\operatorname*{lk}\nolimits_{D}(\tau\cup\sigma\setminus
\{j\})\subset2^{A\setminus(\tau\cup\sigma)}$.
\end{enumerate}
\end{proposition}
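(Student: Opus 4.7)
The plan is to handle the three cases separately, in each case reducing to tools already available in the paper: Proposition \ref{prop!bettiForStellarsVersion1} for the stellar subdivision bound in Case I, and the join identity \eqref{eqn!betti_of_joint} for Cases II and III. The combinatorial descriptions of $\operatorname{lk}_{D_\tau}\sigma$ furnished by Lemma \ref{Lem_stellar_link} make these reductions essentially mechanical once the correct identifications of the auxiliary complexes are made.

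For Case I, Lemma \ref{Lem_stellar_link} gives $L = (L_1)_{\tau\setminus\sigma}$, the stellar subdivision of $L_1 = \operatorname{lk}_D\sigma$ along the nonempty face $\tau\setminus\sigma$ with new vertex $j$. Because $D$ is generalized Gorenstein*, so is $L_1$ (as recalled in Section~\ref{sectnotation}), so Proposition \ref{prop!bettiForStellarsVersion1} applies to $L_1$ and $\tau\setminus\sigma$ and produces the desired inequalities, provided I verify the identification
\[
\operatorname{lk}_{L_1}(\tau\setminus\sigma) \;=\; \operatorname{lk}_D(\tau\cup\sigma) \;=\; L_2.
\]
This follows by a direct unwinding of the definitions: a subset $\alpha\subset A\setminus(\tau\cup\sigma)$ lies in $\operatorname{lk}_{L_1}(\tau\setminus\sigma)$ iff $\alpha\cap\sigma=\emptyset$, $\alpha\cap(\tau\setminus\sigma)=\emptyset$, and $\alpha\cup\tau\cup\sigma\in D$, which is precisely membership in $\operatorname{lk}_D(\tau\cup\sigma)$.

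For Case III, Lemma \ref{Lem_stellar_link} gives $L = L_3 * \partial(\tau\setminus\sigma)$. The boundary complex $\partial(\tau\setminus\sigma)$ has Stanley--Reisner ring $R_{\tau\setminus\sigma}/(\prod_{a\in\tau\setminus\sigma}x_a)$, which is a codimension-one complete intersection; its Betti numbers are $b_0=b_1=1$ and $b_i=0$ for $i\geq 2$ (this covers also the degenerate case $|\tau\setminus\sigma|=1$, where $\partial\{a\}=\{\emptyset\}$ and $k[\partial\{a\}]=k[x_a]/(x_a)$). Plugging these values into the join formula \eqref{eqn!betti_of_joint} yields $b_i(k[L])=b_i(k[L_3])+b_{i-1}(k[L_3])$, as required. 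Case II is handled by the same device applied to the singleton $\{j\}$: since $\sigma\cup\{j\}\notin D_\tau$ (else the decomposition of $D_\tau$ as $(D\setminus\operatorname{star}_D\tau)\cup(2^{\{j\}}*\operatorname{lk}_D\tau*\partial\tau)$ would force $\sigma\cup\tau\in D$, contradicting the Case II hypothesis), the complex $L$ is, as a subcomplex of $2^{(A\cup\{j\})\setminus\sigma}$, nothing but $L_1 * \partial\{j\}$, and the same computation gives the stated equality.

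The only real obstacle is the bookkeeping in Case II: one must be careful about the ambient polynomial ring, since $L_1$ naturally lives over $R_{A\setminus\sigma}$ whereas Betti numbers of $L$ are taken over the larger ring $R_{(A\cup\{j\})\setminus\sigma}$. I would make this explicit either by identifying $L$ with the join $L_1*\partial\{j\}$ (so that \eqref{eqn!betti_of_joint} applies directly) or equivalently by tensoring the minimal free resolution of $k[L_1]$ with the length-one Koszul complex on the variable $x_j$, which is a minimal generator of $I_{L,(A\cup\{j\})\setminus\sigma}$ because $\{j\}\notin L$. Either route gives the predicted shift, and assembling the three cases then completes the proof.
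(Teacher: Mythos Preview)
Your proof is correct and follows essentially the same route as the paper's: Case~I via Lemma~\ref{Lem_stellar_link}(1), the identification $\operatorname{lk}_{L_1}(\tau\setminus\sigma)=\operatorname{lk}_D(\tau\cup\sigma)$, and Proposition~\ref{prop!bettiForStellarsVersion1}; Case~III via Lemma~\ref{Lem_stellar_link}(3) and the join formula~\eqref{eqn!betti_of_joint}. The only cosmetic difference is Case~II: the paper simply writes $I_{L,(A\cup\{j\})\setminus\sigma}=(I_{L_1,A\setminus\sigma})+(x_j)$ and declares the result clear, whereas you package the same fact as $L=L_1\ast\partial\{j\}$ (equivalently, tensor with the Koszul complex on $x_j$) and then invoke~\eqref{eqn!betti_of_joint}; these are the same computation.
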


\begin{proof}
Assume first we are in Case I, that is $j\not \in \sigma$ and $\tau\cup\sigma$
is a face of $D$. By part (1) of Lemma~\ref{Lem_stellar_link} we have
$L=(L_{1})_{\tau\setminus\sigma}$. Furthermore, a straightforward calculation
shows that $\operatorname*{lk}\nolimits_{D}(\tau\cup\sigma)=\operatorname*{lk}%
\nolimits_{L_{1}}(\tau\setminus\sigma)$. 
The result follows from Proposition~\ref{prop!bettiForStellarsVersion1}
applied to the stellar subdivision of the face $\tau\setminus\sigma$ of
$L_{1}$.

Assume now we are in Case II, that is $j\not \in \sigma$ and $\tau\cup\sigma$
is not a face of $D$. By part (2) of Lemma~\ref{Lem_stellar_link} we have
\[
I_{L,(A\cup\{j\})\setminus\sigma}=(I_{L_{1},A\setminus\sigma})+(x_{j})\subset
R_{(A\cup\{j\})\setminus\sigma}%
\]
and the result is clear. In Case III, that is, $j\in\sigma$, we have by part
(3) of Lemma~\ref{Lem_stellar_link} that $L=L_{3}\ast\partial(\tau
\setminus\sigma)$. Since $k[\partial(\tau\setminus\sigma)]$ is the quotient of
a polynomial ring by a single equation, hence has nonzero Betti numbers only
$b_{0}=b_{1}=1$, the result follows by Equation~(\ref{eqn!betti_of_joint}).
\end{proof}

For $c\geq1$ and $q\geq2$ recall that we defined $\mathcal{D}_{q,c}$ as the
set of simplicial subcomplexes $D\subset2^{[q+c]}$ such that there exists a
sequence of simplicial complexes
\[
D_{1},D_{2},\dots,D_{c-1},D_{c}=D
\]
with the property that $D_{1}=\partial([q+1])\subset2^{[q+1]}$ is the boundary
complex of the simplex on $q+1$ vertices, and, for $0\leq i\leq c-1$,
$D_{i+1}\subset2^{[q+i+1]}$ is obtained from $D_{i}\subset2^{[q+i]}$ by a
stellar subdivision of a face of $D_{i}$ of dimension at least $1$ with new
vertex $q+i+1$. It is clear that $\operatorname{supp}D_{i}=[q+i]$ and
$\operatorname{codim}k[D_{i}]=i$ for all $i$.

Assume $D\in\mathcal{D}_{q,c}$ and consider the Stanley--Reisner ring
$k[D]=R_{[q+c]}/I_{D}$. By \cite[Corollary~5.6.5]{BH} $D$ is Gorenstein*. As a
consequence, since $\operatorname{codim}k[D]=c$ the only nonzero Betti numbers
$b_{i}$ of $k[D]$ are $1=b_{0},b_{1},\dots,b_{c-1},b_{c}=1$ and $b_{i}%
=b_{c-i}$ for all $i$.

To prove Theorem \ref{thm!bettiBoundWeakVersion} we need to enlarge the class
of ideals we consider by including the ideals of links. For $q\geq2$ and
$c\geq1$ we define
\begin{align*}
\mathcal{I}_{q,c}  &  =\{I_{D}\ \bigm|\ D\in\mathcal{D}_{q,c}\}\;\;\cup\\
&  \{\left(  I_{D}:x_{\sigma}\right)  \subset k[x_{1},\dots,x_{q+c}%
]\bigm|D\in\mathcal{D}_{q,c},\;\sigma\in D\text{ a nonempty face}\}.
\end{align*}
The following theorem is the key technical result.

\begin{theorem}
\label{thmBettiBoundWithLink}Suppose $c\geq1$, $q \geq2$, $R=k[x_{1},
\dots,x_{q+c}]$ and $I\in\mathcal{I}_{q,c}$. Then for the the Betti numbers
$b_{i}\left(  R/I\right)  $ of the minimal resolution of $R/I$ as $R$-module
it holds that%
\[
b_{i}\left(  R/I\right)  \leq l_{c,i}%
\]
for all $0 \leq i \leq c$.
\end{theorem}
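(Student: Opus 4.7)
The plan is induction on the codimension $c$, treating both types of ideals in $\mathcal{I}_{q,c}$ simultaneously, so that the inductive hypothesis controls not only Stanley--Reisner rings $k[D]$ but also all their links. For $c=1$, the only complex is $\partial([q+1])$ with principal Stanley--Reisner ideal $(x_1 x_2\cdots x_{q+1})$, and every colon ideal $(I_{\partial([q+1])}:x_\sigma)$ is again principal; hence all Betti vectors equal $(1,1)=l_1$. For $c=2$, formula~(\ref{eqn!egns_of_stellar}) gives $I_D=(x_\tau,\,x_j\prod_{i\in [q+1]\setminus\tau}x_i)$, a complete intersection of two monomials whose Koszul resolution is minimal, so the Betti vector is $(1,2,1)=l_2$. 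The corresponding links in $\mathcal{I}_{q,2}$ are Gorenstein of height two in their polynomial ring, hence are also complete intersections with the same Betti vector.

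For the inductive step with $c\ge 3$, let $I\in\mathcal{I}_{q,c}$ and write $D=D_{c-1,\tau}$ where $D_{c-1}\in\mathcal{D}_{q,c-1}$ and $\tau$ has dimension at least one. If $I=I_D$, set $L=\operatorname{lk}_{D_{c-1}}\tau$ and apply Proposition~\ref{prop!bettiForStellarsVersion1}. Since the minimal generators of $(I_{D_{c-1}}:x_\tau)$ avoid the variables indexed by $\tau$, the Betti numbers of $k[L]$ coincide with those of $R/(I_{D_{c-1}}:x_\tau)\in\mathcal{I}_{q,c-1}$, and induction gives $b_i(k[D_{c-1}]),\,b_i(k[L])\le l_{c-1,i}$. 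Substituting yields $b_1(k[D])\le 2l_{c-1,1}+1=l_{c,1}$ (valid because $c\ge 3$) and, for $2\le i\le c-2$, $b_i(k[D])\le 2l_{c-1,i-1}+2l_{c-1,i}=l_{c,i}$. If instead $I=(I_D:x_\sigma)$ for a nonempty $\sigma\in D$, then $b_i(R/I)=b_i(k[\operatorname{lk}_D\sigma])$, and Lemma~\ref{Lem_stellar_link} together with Proposition~\ref{prop!bettiForStellarsVersion2} splits the analysis into three cases, each involving only links of $D_{c-1}$, hence only ideals in $\mathcal{I}_{q,c-1}$. Case~I reproduces the arithmetic above; Cases~II and~III give $b_i(k[L])=b_{i-1}(k[L'])+b_i(k[L'])\le l_{c-1,i-1}+l_{c-1,i}$, which is bounded by $l_{c,i}$ directly from the defining recursion. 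The boundary indices $i\in\{0,c\}$ are handled by $b_0=b_c=1$, and $i=c-1$ by the Gorenstein$^*$ symmetry $b_{c-1}=b_1$ combined with the palindromy $l_{c,c-1}=l_{c,1}$, an immediate induction from the recursion defining $l_c$.

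The main obstacle is coordinating, for every index $i$, the Kustin--Miller-type bounds of Propositions~\ref{prop!bettiForStellarsVersion1} and~\ref{prop!bettiForStellarsVersion2} against the recursion for $l_c$, taking into account all three combinatorial subcases of Lemma~\ref{Lem_stellar_link}. The $+1$ correction appearing in the $b_1$ estimate matches $l_{c,1}-2l_{c-1,1}$ only when $c\ge 3$, which is precisely why the codimension-two case must be isolated as a separate base case using the complete-intersection structure of Gorenstein ideals of height two. A secondary subtlety is that Proposition~\ref{prop!bettiForStellarsVersion1} controls only the middle Betti numbers $b_i$ with $2\le i\le c-2$, so the upper-index Betti number $b_{c-1}$ must be reached indirectly through Gorenstein symmetry rather than from the Kustin--Miller complex itself.
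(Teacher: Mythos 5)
Your proposal is correct and follows essentially the same route as the paper: decompose $\mathcal{I}_{q,c}$ into Stanley--Reisner ideals and colon ideals of links, run induction on $c$ using Propositions~\ref{prop!bettiForStellarsVersion1} and~\ref{prop!bettiForStellarsVersion2} together with Lemma~\ref{Lem_stellar_link}, match against the recursion for $l_c$, and dispose of the low-codimension cases via the complete-intersection structure of Gorenstein ideals of height $\leq 2$. The only cosmetic difference is that the paper phrases the induction as a minimal-counterexample argument and leaves the treatment of the boundary indices $i\in\{0,c-1,c\}$ via Gorenstein$^*$ symmetry implicit, whereas you spell it out.
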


\begin{proof}
First note that from the definition of the bounding sequence $l_{c}$ it is
clear that $l_{c,i}=l_{c,c-i}$ for all $0\leq i\leq c$; that $l_{c,0}%
=l_{c,c}=1$ for all $c\geq1$; that $l_{c+1,1}=2l_{c,1}+1$ for all $c\geq2$;
and that $l_{c+1,i}=2l_{c,i-1}+2l_{c,i}$ for $2\leq i\leq(c+1)-2$.

Assume the claim is not true. Then there exist $c\geq1$, $q\geq2$ and an ideal
$I\in\mathcal{I}_{q,c}$ with $I\notin\mathcal{W}_{c}$, where by definition
\[
\mathcal{W}_{c}=\{I\in\mathcal{I}_{p,c}\bigm|p\geq2\;\text{ and }%
\;b_{i}\left(  R/I\right)  \leq l_{c,i}\;\text{ for all }i\}.
\]
We fix such an ideal $I$ with $c$ the least possible, and we will get a
contradiction. Since Gorenstein codimension $1$ or $2$ implies complete
intersection, we necessarily have $c\geq3$.

The first case is that $I=I_{D_{1}}$ for some $D_{1}\in\mathcal{D}_{q,c}$, so
there exists $D\in\mathcal{D}_{q,c-1}$ and a face $\tau$ of $D$ of dimension
at least $1$ such that $D_{1}=D_{\tau}$. Since $c$ has been chosen to be the
smallest possible, we have that $I_{D}\in\mathcal{W}_{c-1}$ and $(I_{D}%
:x_{\tau})\in\mathcal{W}_{c-1}$. Using the properties of $l_{c}$ mentioned
above, it follows by Proposition~\ref{prop!bettiForStellarsVersion1} that
$I\in\mathcal{W}_{c}$, which is a contradiction.

Assume now that $I=(I_{D_{1}}:x_{\sigma})$ for some $D_{1}\in\mathcal{D}%
_{q,c}$ and face $\sigma$ of $D_{1}$. Write $D_{1}=D_{\tau}$, for some
$D\in\mathcal{D}_{q,c-1}$ and face $\tau$ of $D$ of dimension at least $1$.
The new vertex $j$ of $D_{\tau}$ is $q+c$. In the remaining of the proof we
will use the simplicial complexes $L$ and $L_{i}$, with $1\leq i\leq3$,
defined in Proposition~\ref{prop!bettiForStellarsVersion2}. We have three
cases. For all of them we will show that $I\in\mathcal{W}_{c}$, which is a contradiction.

Assume we are in Case I, that is $j\not \in \sigma$ and $\tau\cup\sigma$ is a
face of $D$. Since by the minimality of $c$ we have that both ideals $I_{L}$
and $I_{L_{1}}$ are in $\mathcal{W}_{c-1}$, it follows by Case I of
Proposition~\ref{prop!bettiForStellarsVersion2} that $I\in\mathcal{W}_{c}$.
Assume now we are in Case II, that is $j\not \in \sigma$ and $\tau\cup\sigma$
is not a face of $D$. Again by the minimality of $c$ we have $I_{L_{1}}%
\in\mathcal{W}_{c-1}$, so using Case II of
Proposition~\ref{prop!bettiForStellarsVersion2} it follows that $I\in
\mathcal{W}_{c}$. Finally, assume we are in Case III, that is $j\in\sigma$. By
the minimality of $c$ we have $I_{L_{3}}\in\mathcal{W}_{c-1}$, so using Case
III of Proposition~\ref{prop!bettiForStellarsVersion2} it follows that
$I\in\mathcal{W}_{c}$. This finishes the proof.
\end{proof}

\begin{remark}
Combining Proposition~\ref{prop!bettiForStellarsVersion2} with
Theorem~\ref{thmBettiBoundWithLink} it is not hard to show that for fixed
$q\geq2$, there exists $c_{0}\geq1$ such that $b_{i}(k[D])<l_{c,i}$ for all
$c\geq c_{0},D\in\mathcal{D}_{q,c}$ and $1\leq i\leq c-1$. So if we fix $q$
for $c$ sufficiently large the Betti bound in
Theorem~\ref{thm!bettiBoundWeakVersion} is not sharp. We leave the details to
the interested reader.
\end{remark}

\subsection{Proof of Lemma~\ref{Lem_stellar_link}}

\label{subs!proofofCombinatorialLemma}

We will repeatedly use in the following two observations: If $\alpha\in D$, we
have $\alpha\in D_{\tau}$ if and only if $\tau$ is not a subset of $\alpha$.
Moreover, if $\beta\in D_{\tau}$ and $j\notin\beta$ then $\beta\in D$. We will
also use the following notation. For $\beta\in D$ with $\tau\subset\beta$ and
nonempty $\rho\subset\tau$ we set
\[
\operatorname{transf}(\beta,\rho)=(\beta\setminus\rho)\cup\{j\}\in D_{\tau}.
\]
Using this notation, $D_{\tau}$ is the disjoint union of the set consisting of
the faces $\alpha\in D$ which do not contain $\tau$ with the set
\[
\{\operatorname{transf}(\beta,\rho)\bigm|\beta\in D\text{ with }\tau
\subset\beta,\;\emptyset\not =\rho\subset\tau\}.
\]
Note, that $\tau\setminus\sigma$ is nonempty since $\sigma\in D_{\tau}$
implies that $\tau$ is not a subset of $\sigma$. Recall that%
\[
\operatorname{lk}_{D}{\sigma=}\{\alpha\in D\bigm|\alpha\cap\sigma
=\emptyset\text{ and }\alpha\cup\sigma\in D\}\text{.}%
\]
\smallskip

\emph{CASE I:} Assume $j\notin\sigma$ and $\sigma\cup\tau\in D$. Since
$\sigma\cap(\tau\setminus\sigma)=\emptyset$ and $\sigma\cup\tau\in D$ we have
that indeed $\tau\setminus\sigma$ is a face of $\operatorname{lk}_{D}\sigma$.
We prove that $\operatorname{lk}_{D_{\tau}}{\sigma}=(\operatorname{lk}%
_{D}\sigma)_{\tau\setminus\sigma}$.

Given $\alpha\in\operatorname{lk}_{D_{\tau}}{\sigma}$, we show that $\alpha
\in(\operatorname{lk}_{D}\sigma)_{\tau\setminus\sigma}$. There are two subcases:

\emph{Subcase 1.1:} Assume $j\notin\alpha$. Then $\alpha\cup\sigma\in D_{\tau
}$ and $j\notin\alpha\cup\sigma$ implies $\alpha\cup\sigma\in D$, hence by
$\alpha\cap\sigma=\emptyset$ we have $\alpha\in\operatorname{lk}_{D}\sigma$.
If $\tau\setminus\sigma$ is a subset of $\alpha$ we get $\tau\subset\alpha
\cup\sigma$, which contradicts $\alpha\cup\sigma\in D_{\tau}$. So
$\tau\setminus\sigma$ is not a subset of $\alpha$, which implies that
$\alpha\in(\operatorname{lk}_{D}\sigma)_{\tau\setminus\sigma}$.

\emph{Subcase 1.2:} Assume $j\in\alpha$. Since $\alpha\cup\sigma\in D_{\tau}$
there exist $\beta\in D$ with $\tau\subset\beta$ and nonempty $\rho\subset
\tau$ such that
\begin{equation}
\alpha\cup\sigma=\operatorname{transf}(\beta,\rho)=(\beta\setminus\rho
)\cup\{j\}. \label{eqn!temp9654sw}%
\end{equation}
As a consequence, using $\alpha\cap\sigma=\emptyset$, we get $\alpha
=(\beta\setminus(\rho\cup\sigma))\cup\{j\}$. Since $j\notin\sigma$
Equation~(\ref{eqn!temp9654sw}) also implies $\sigma\subset\beta$. Set
$\beta^{\prime}=\beta\setminus\sigma\in D$. It is enough to show that
$\rho,\beta^{\prime}\in\operatorname{lk}_{D}\sigma$, $\tau\setminus
\sigma\subset\beta^{\prime}$, $\emptyset\neq\rho\subset\tau\setminus\sigma$,
and
\begin{equation}
\alpha=(\beta^{\prime}\setminus\rho)\cup\{j\}=\operatorname{transf}%
(\beta^{\prime},\rho). \label{equ alpha transf}%
\end{equation}
By Equation~(\ref{eqn!temp9654sw}), we have $\rho\cap\sigma=\emptyset$. By
definition, $\beta^{\prime}\cap\sigma=\emptyset$. Moreover, $\rho\cup
\sigma\subset\tau\cup\sigma\in D$, and $\beta^{\prime}\cup\sigma=\beta\in D$.
Since $\tau\subset\beta$ we have $\tau\setminus\sigma\subset\beta^{\prime}$.
By $\rho\subset\tau$ and $\rho\cap\sigma=\emptyset$ it follows that
$\rho\subset\tau\setminus\sigma$. Finally, Equation~(\ref{equ alpha transf})
follows from Equation~(\ref{eqn!temp9654sw}) using $\alpha\cap\sigma
=\emptyset$ and $j\notin\sigma$.

Conversely, assume $\alpha\in(\operatorname{lk}_{D}\sigma)_{\tau
\setminus\sigma}$, that is, $\alpha$ is in the stellar of the link. We will
prove that $\alpha\in\operatorname{lk}_{D_{\tau}}\sigma$. We have two subcases:

\emph{Subcase 2.1:} Assume $j\notin\alpha$. Then $\alpha\in\operatorname{lk}%
_{D}\sigma$. We have that $\tau\setminus\sigma$ is not a subset of $\alpha$
(since $\tau\setminus\sigma$ a subset of $\alpha$ implies $\alpha$ not in
$(\operatorname{lk}_{D}\sigma)_{\tau\setminus\sigma}$, a contradiction), as a
consequence $\tau$ is not a subset of $\alpha\cup\sigma$. Hence $\alpha
\cup\sigma\in D_{\tau}$ which implies that $\alpha\in\operatorname{lk}%
_{D_{\tau}}\sigma$.

\emph{Subcase 2.2:} Assume $j\in\alpha$. Then there exist $\beta
\in\operatorname{lk}_{D}\sigma$ with $\tau\setminus\sigma\subset\beta$ and
nonempty $\rho\subset\tau\setminus\sigma$ with $\alpha=\operatorname{transf}%
(\beta,\rho)=(\beta\setminus\rho)\cup\{j\}$. To finish the proof of the
subcase we will show that $\beta^{\prime}=\beta\cup\sigma$ is a face of $D$
containing $\tau$ and $\alpha\cup\sigma=\operatorname{transf}(\beta^{\prime
},\rho)$. Indeed, $\beta\in\operatorname{lk}_{D}\sigma$ implies $\beta
^{\prime}\in D$, and $\tau\setminus\sigma\subset\beta$ implies $\tau
\subset\beta^{\prime}$. Moreover, since $\rho\cap\sigma=\emptyset$ we have
\[
\operatorname{transf}(\beta^{\prime},\rho)=(\beta^{\prime}\setminus{\rho}%
)\cup\{j\}=\sigma\cup((\beta\setminus{\rho})\cup\{j\})=\alpha\cup\sigma,
\]
which finishes the proof of CASE I. \smallskip

\emph{CASE II:} Assume $j\notin\sigma$ and $\tau\cup\sigma\notin D$. We prove
that $\operatorname{lk}_{D_{\tau}}{\sigma}=\operatorname{lk}_{D}\sigma$.

Given $\alpha\in\operatorname{lk}_{D_{\tau}}{\sigma}$, we show that $\alpha
\in\operatorname{lk}_{D}\sigma$. There are two subcases (in fact, we will show
the second cannot happen):

\emph{Subcase 1.1:} Assume $j\notin\alpha$. This implies $j\notin(\alpha
\cup\sigma)$ hence $\alpha\cup\sigma\in D$. Therefore $\alpha\in
\operatorname{lk}_{D}\sigma$.

\emph{Subcase 1.2:} Assume $j\in\alpha$. Then there exist a face $\beta$ of
$D$ with $\tau\subset\beta$ and nonempty $\rho\subset\tau$ such that
\[
\alpha\cup\sigma=\operatorname{transf}(\beta,\rho)=(\beta\setminus\rho
)\cup\{j\}.
\]
Hence, $(\alpha\cup\sigma)\backslash\{j\}=\beta\setminus\rho$, which implies
\[
\tau\cup\sigma\subset\tau\cup(\alpha\cup\sigma)\backslash\{j\}\subset\tau
\cup\beta=\beta\in D\text{.}%
\]
From this it follows that $\tau\cup\sigma\in D$, contradicting the assumption
$\tau\cup\sigma\notin D$. So $j\in\alpha$ is impossible.

Conversely, assume $\alpha\in\operatorname{lk}_{D}\sigma$. To show $\alpha
\in\operatorname{lk}_{D_{\tau}}{\sigma}$ it is enough to prove $\alpha
\cup\sigma\in D_{\tau}$, which follows from $\tau\not \subset \alpha\cup
\sigma$. So assume $\tau\subset\alpha\cup\sigma$, then $\tau\setminus
\sigma\subset\alpha\in\operatorname{lk}_{D}\sigma$ so $(\tau\setminus
\sigma)\cup\sigma\in D$, hence $\tau\cup\sigma\in D$, contradicting the
assumption $\tau\cup\sigma\notin D$. So $\tau$ is not a subset of $\alpha
\cup\sigma$. This finishes the proof of CASE II. \smallskip

\emph{CASE III:} We assume $j\in\sigma$. We first show that $\tau\cup
\sigma\setminus\{j\}$ is a face of $D$. Indeed, $\sigma\in D_{\tau}$ and
$j\in\sigma$ imply that there exist a face $\beta_{1}$ of $D$ with
$\tau\subset\beta_{1}$ and nonempty $\rho_{1}\subset\tau$ such that
\[
\sigma=\operatorname{transf}(\beta_{1},\rho_{1})=(\beta_{1}\setminus\rho
_{1})\cup\{j\}.
\]
As a consequence $\sigma\setminus\{j\}\subset\beta_{1}$ which together with
$\tau\subset\beta_{1}$ implies that $\tau\cup\sigma\setminus\{j\}\subset
\beta_{1}$, hence $\tau\cup\sigma\setminus\{j\}$ is a face of $D$. We will
show that
\[
\operatorname{lk}_{D_{\tau}}{\sigma}=\operatorname{lk}_{D}(\tau\cup
\sigma\setminus\{j\})\ast\partial(\tau\setminus\sigma).
\]

Assume $\alpha\in\operatorname{lk}_{D_{\tau}}{\sigma}$. Then $\alpha\cap
\sigma=\emptyset$, hence $j\notin\alpha$. Since $j\in\alpha\cup\sigma$ and
$\alpha\cup\sigma\in D_{\tau}$ there exists $\beta\in D$ with $\tau
\subset\beta$ and nonempty $\rho\subset\tau$ such that
\[
\alpha\cup\sigma=\operatorname{transf}(\beta,\rho)=(\beta\setminus\rho
)\cup\{j\},
\]
so in particular $\sigma\setminus\{j\}\subset\beta$ and $(\alpha\cup
\sigma)\cap\rho=\emptyset$.

Set $\alpha_{1}=\alpha\cap(\tau\setminus\sigma)$ and $\alpha_{2}%
=\alpha\setminus\alpha_{1}$, hence $\alpha_{2}\cap(\tau\setminus
\sigma)=\emptyset$. Since $\alpha$ is the (disjoint) union of $\alpha_{1}$ and
$\alpha_{2}$ we need to show that $\alpha_{1}\in\partial(\tau\setminus\sigma)$
and $\alpha_{2}\in\operatorname{lk}_{D}(\tau\cup\sigma\setminus\{j\})$. If
$\alpha_{1}=\tau\setminus\sigma$ we would have $(\tau\setminus\sigma
)\subset\alpha$, hence $\tau\subset(\alpha\cup\sigma)$ which contradicts that
$(\alpha\cup\sigma)\cap\rho=\emptyset$. Hence $\alpha_{1}\in\partial
(\tau\setminus\sigma)$.

Since $\alpha\cap\sigma=\emptyset$ we get $\alpha_{2}\cap\sigma=\emptyset$,
which together with $\alpha_{2}\cap(\tau\setminus\sigma)=\emptyset$ implies
that $\alpha_{2}\cap(\tau\cup\sigma\setminus\{j\})=\emptyset$. We will show
$\alpha_{2}\cup(\tau\cup\sigma\setminus\{j\})\in D$. Since $\alpha\subset
\beta\cup\{j\}$ and $j\notin\alpha$, we have $\alpha\subset\beta$, hence
$\alpha_{2}\subset\beta$. By the definition of $\beta$ we have $\tau
\subset\beta$ and as we showed above $\sigma\setminus\{j\}\subset\beta$. As a
consequence $(\alpha_{2}\cup\tau\cup\sigma)\setminus\{j\}\subset\beta$, hence
$(\alpha_{2}\cup\tau\cup\sigma)\setminus\{j\}\in D$. This finishes the proof
of $\alpha\in\operatorname{lk}_{D}(\tau\cup\sigma\setminus\{j\})\ast
\partial(\tau\setminus\sigma)$.\medskip

For the converse, assume $\alpha_{1}\in\partial(\tau\setminus\sigma)$ and
$\alpha_{2}\in\operatorname{lk}_{D}(\tau\cup\sigma\setminus\{j\})$. We will
show that $\alpha_{1}\cup\alpha_{2}\in\operatorname{lk}_{D_{\tau}}\sigma$. We
have that $(\alpha_{2}\cup\tau\cup\sigma)\setminus\{j\}\in D$, that
$(\alpha_{2}\cap(\tau\cup\sigma)\setminus\{j\})=\emptyset$ (in particular
$\alpha_{2}\cap\tau=\emptyset$ and $\alpha_{2}\cap\sigma=\emptyset$ since
$j\notin\alpha_{2}$), that $\alpha_{1}\cap\sigma=\emptyset$ and that
$\alpha_{1}$ is a proper subset of $\tau\setminus\sigma$. Hence there exists
$\gamma\in(\tau\setminus\sigma)\setminus\alpha_{1}=\tau\setminus(\alpha
_{1}\cup\sigma)$. Taking into account that $\alpha_{2}\cap\tau=\emptyset$ it
follows that $\gamma\in\tau\setminus(\alpha_{1}\cup\alpha_{2}\cup\sigma)$.

Since $\alpha_{2}\cap\tau=\emptyset$ and $\alpha_{1}\subset\tau$ we have
$\alpha_{1}\cap\alpha_{2}=\emptyset$. We will now show that $\alpha_{1}%
\cup\alpha_{2}\in\operatorname{lk}_{D_{\tau}}\sigma$. First as we observed
above both $\alpha_{1}$ and $\alpha_{2}$ have empty intersection with $\sigma
$. So it is enough to show that $(\alpha_{1}\cup\alpha_{2}\cup\sigma)\in
D_{\tau}$. Set $\beta_{2}=(\alpha_{2}\cup\tau\cup\sigma)\setminus\{j\}$,
which, as observed above, is in $D$. Since $\gamma\in\tau$, it follows that
$\operatorname{transf}(\beta_{2},\{\gamma\})\in D_{\tau}$. Since, as observed
above, $\gamma\in\tau\setminus(\alpha_{1}\cup\alpha_{2}\cup\sigma)$ we have
\[
(\alpha_{1}\cup\alpha_{2}\cup\sigma)\subset(\alpha_{2}\cup\tau\cup
\sigma)\setminus\{\gamma\}=\operatorname{transf}(\beta_{2},\{\gamma\}),
\]
hence $(\alpha_{1}\cup\alpha_{2}\cup\sigma)\in D_{\tau}$. This finishes the
proof of CASE III, and hence the proof of Lemma~\ref{Lem_stellar_link}.

\section{The structure of the Kustin--Miller complex in the stellar
subdivision case\label{sec!kmcomplexconstrinstellar}}

Kustin and Miller introduced in \cite{KM} the Kustin--Miller complex
construction which produces a projective resolution of the Kustin--Miller
unprojection ring in terms of projective resolutions of the initial data. In
Proposition \ref{prop!about_minimality_with_multidegrees} we prove a criterion
for the minimality of the resolution, which will be used in
Section~\ref{sect!champions}. For that, we analyze the additional structure of
the construction in the case of stellar subdivisions.

We will use the graded version of the Kustin--Miller complex construction as
described in \cite[Section~2]{BP3}. Note, that there is an implementation of
the construction available for the computer algebra system \textsc{Macaulay2},
see [loc. cit.].

In this section $D\subset2^{A}$ will be a generalized Gorenstein* simplicial
complex, $\tau\in D$ a face of positive dimension and $D_{\tau}\subset
2^{A\cup\{j\}}$ the corresponding stellar subdivision with new vertex
$j\in\mathbb{N}\setminus A$.

Let $R=R_{A}[z]$ with the following grading: $\deg x_{a}=1$ for $a\in A$ and
$\deg z=\dim\tau$. Write $I\subset R$ for the ideal generated by $I_{D,A}$ and
set $J=(I_{D,A}:x_{\tau},z)\subset R$. Denote by%
\[%
\begin{tabular}
[c]{ll}%
$C_{J}:$ & $R/J\leftarrow A_{0}\overset{a_{1}}{\leftarrow}A_{1}\overset{a_{2}%
}{\leftarrow}\dots\overset{a_{g-1}}{\leftarrow}A_{g-1}\overset{a_{g}%
}{\leftarrow}A_{g}\leftarrow0$\\
$C_{I}:$ & $R/I\leftarrow B_{0}\overset{b_{1}}{\leftarrow}B_{1}\overset{b_{2}%
}{\leftarrow}\dots\overset{b_{g-1}}{\leftarrow}B_{g-1}\leftarrow0$%
\end{tabular}
\]
the minimal graded free resolutions of $R/J$ and $R/I$ respectively.

By Proposition~\ref{prop!stellar_for_generalised} $\operatorname{Hom}%
_{R/I}(J/I,R/I)$ is generated as an $R/I$-module by the inclusion homomorphism
together with the map $\psi$ that sends $(I_{D,A}:x_{\tau})$ to $0$ and $z$ to
$x_{\tau}$. By the Kustin-Miller complex construction we obtain the
unprojection ideal $U\subset R[T]$ of the pair $J/I\subset R/I$ defined by
$\psi$ with new variable $T$, and a, in general non-minimal, graded free
resolution $C_{U}$ of $R[T]/U$ as $R[T]$-module. For more details see [loc. cit.].

Clearly, the $k$-algebra $S$ defined in
Proposition~\ref{prop!stellar_for_generalised} is isomorphic to $R[T]/U$,
since it is obtained from $R[T]/U$ by substituting $T$ with $x_{j}$. By the
same proposition $z$ is $R[T]/U$-regular and $(R[T]/U)/(z)\cong k[D_{\tau}]$.

We denote by $P$ the ideal $(I_{D,A}:x_{\tau})$ of $R_{A}$, and by
\[%
\begin{tabular}
[c]{ll}%
$C_{P}:$ & $R_{A}/P\leftarrow P_{0}\overset{p_{1}}{\leftarrow}P_{1}%
\overset{p_{2}}{\leftarrow}\dots\overset{p_{g-1}}{\leftarrow}P_{g-1}%
\leftarrow0$%
\end{tabular}
\
\]
the minimal graded free resolution of $R/P$ as $R_{A}$-module. Moreover, we
denote by
\[%
\begin{tabular}
[c]{ll}%
$C_{z}:$ & $k[z]/(z)\leftarrow k[z]{\leftarrow}k[z]\leftarrow0$%
\end{tabular}
\
\]
the minimal graded free resolution of $k[z]/(z)$ as $k[z]$-module. Since
$J=(P,z)$ we have that $C_{J}$ is the tensor product (over $k$) of the
complexes $C_{P}$ and $C_{z}$. Hence $A_{0}=P_{0}^{a}$, $A_{g}=P_{g-1}^{a}$
and
\begin{equation}
A_{i}=P_{i-1}^{a}\oplus P_{i}^{a} \label{equation!decomp_of_Ai}%
\end{equation}
for all $1\leq i\leq g-1$, where $P_{i}^{a}=P_{i}\otimes_{k}k[z]$ considered
as $R$-module. Moreover, using this decomposition, we have that
\[
a_{1}=%
\begin{pmatrix}
p_{1} & z
\end{pmatrix}
,\quad a_{g}=%
\begin{pmatrix}
-z\\
p_{g-1}%
\end{pmatrix}
,\quad\text{ and }\quad\quad a_{i}=%
\begin{pmatrix}
p_{i} & -zE\\
0 & p_{i-1}%
\end{pmatrix}
\]
for $2\leq i\leq g-1$, where $E$ denotes the identity matrix of size equal to
the rank of $P_{i-1}$.

Recall from [loc. cit.] that the construction of $C_{U}$ involves chain maps
$\alpha:C_{I}\rightarrow C_{J}$, $\beta:C_{J}\rightarrow C_{I}[-1]$ and a
homotopy map $h:C_{I}\rightarrow C_{I}$, given by maps $\alpha_{i}%
:B_{i}\rightarrow A_{i}$, $\beta_{i}:A_{i}\rightarrow B_{i-1}$ and
$h_{i}:B_{i}\rightarrow B_{i}$ for all $i$. We will use that $\alpha_{0}$ is
an invertible element of $R$, that $h_{0}=h_{g}=0$, and that the $h_{i}$
satisfy the defining property
\begin{equation}
\beta_{i}\alpha_{i}=h_{i-1}b_{i}+b_{i}h_{i} \label{equ homotopy}%
\end{equation}
for all $i$.

Using the decomposition (\ref{equation!decomp_of_Ai}), we can write, for
$1\leq i\leq g-1$
\[
\alpha_{i}=%
\begin{pmatrix}
\alpha_{i,1}\\
\alpha_{i,2}%
\end{pmatrix}
,\quad\beta_{i}=%
\begin{pmatrix}
\beta_{i,1} & \beta_{i,2}%
\end{pmatrix}
\text{.}%
\]

\begin{proposition}
\label{prop!zeroForSomeComponents} We can choose $\alpha_{i},\beta_{i}$ and
$h_{i}$ in the following way:

\begin{enumerate}
\item $\alpha_{i},\beta_{i}$ do not involve $z$ for all $i$,

\item $\alpha_{i,2}=\beta_{i,1}=0$ for $1\leq i\leq g-1$, and

\item $h_{i}=0$ for all $i$.

\end{enumerate}
\end{proposition}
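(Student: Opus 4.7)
The plan is to exploit the tensor product structure $C_J = C_P \otimes_k C_z$ together with the fact that $I = I_{D,A}$ does not involve $z$, so the minimal resolution $C_I$ may be taken as $C'_I \otimes_{R_A} R$, where $C'_I$ is the minimal $R_A$-resolution of $R_A/I_{D,A}$; this automatically makes every $b_i$ free of $z$. The block decomposition $A_i = P_i^a \oplus P_{i-1}^a$ that underlies the matrix $a_i = \begin{pmatrix} p_i & -zE \\ 0 & p_{i-1} \end{pmatrix}$ exhibits $P_i^a$ as the ``non-$z$'' summand and $P_{i-1}^a$ as the ``$z$'' summand.

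To build $\alpha$, I would use that $I_{D,A} \subset P = (I_{D,A} : x_\tau)$ in $R_A$, so the natural surjection $R_A/I_{D,A} \twoheadrightarrow R_A/P$ lifts to a chain map $\widetilde{\alpha} : C'_I \to C_P$ over $R_A$ with $\widetilde{\alpha}_0$ a unit and all entries in $R_A$. Setting $\alpha_i = \begin{pmatrix} \widetilde{\alpha}_i \\ 0 \end{pmatrix}$ (landing in the non-$z$ summand) makes $\alpha_{i,2} = 0$, and the block form of $a_i$ reduces the chain-map identity $a_i \alpha_i = \alpha_{i-1} b_i$ to $p_i \widetilde{\alpha}_i = \widetilde{\alpha}_{i-1} b_i$, which holds by construction. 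Symmetrically, multiplication by $x_\tau$ defines a well-defined $R_A$-module map $R_A/P \to R_A/I_{D,A}$ (since $P \cdot x_\tau \subset I_{D,A}$ by the very definition of $P$), which lifts to a chain map $\gamma : C_P \to C'_I$ over $R_A$ with $\gamma_0 = \cdot\, x_\tau$ and all entries in $R_A$. Defining $\beta_i = (0, \gamma_{i-1})$ forces $\beta_{i,1} = 0$, and the chain-map identity for $\beta$ likewise collapses, via the block form of $a_i$, to $b_{i-1} \gamma_{i-1} = \gamma_{i-2} p_{i-1}$, the chain-map property of $\gamma$; at level $1$ one recovers $\beta_1 = (0, x_\tau)$, the prescribed lift of $\psi$.

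With these choices $\beta_i \alpha_i = 0$ identically, since $\alpha_i$ lands in the summand that $\beta_i$ kills. Consequently the homotopy relation $\beta_i \alpha_i = h_{i-1} b_i + b_i h_i$ is trivially satisfied by $h_i = 0$ for all $i$, and in particular $h_0 = h_g = 0$. The main obstacle is essentially bookkeeping: one must align the direct-sum conventions in $A_i = P_i^a \oplus P_{i-1}^a$ with the block forms of $a_i$, $\alpha_i$, $\beta_i$ from the excerpt so that the Kustin--Miller chain-map identities collapse to the standard chain-map identities for $\widetilde{\alpha}$ and $\gamma$ over $R_A$, both of which are entirely routine lifts of $R_A$-module maps to free $R_A$-resolutions.
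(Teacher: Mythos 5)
Your proof is correct, and it packages the argument in a genuinely more structural way than the paper does. Both arguments start from the same two observations: the tensor-product decomposition $C_{J}=C_{P}\otimes_{k}C_{z}$ that gives $A_{i}=P_{i}^{a}\oplus P_{i-1}^{a}$ together with the block form of $a_{i}$, and the fact that $C_{I}$ may be taken to be extended from an $R_{A}$-resolution, so that $I$ and its resolution are $z$-free. Where the two arguments diverge is in how the chain maps are obtained. The paper proceeds by induction: it assumes a chain map $\alpha$ exists, writes the chain-map relation $\alpha_{i}b_{i+1}=a_{i+1}\alpha_{i+1}$ in block form, and uses the fact that $z$ is absent from $b_{i+1}$, $p_{i+1}$ and (inductively) $\alpha_{i,1}$ to argue that $\alpha_{i+1}$ can be \emph{modified} so that the $z$-component $\alpha_{i+1,2}$ vanishes; it then repeats the same style of argument for $\beta$. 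You instead construct $\alpha$ and $\beta$ directly: lift the surjection $R_{A}/I_{D,A}\twoheadrightarrow R_{A}/P$ to $\widetilde{\alpha}\colon C'_{I}\to C_{P}$ over $R_{A}$, lift multiplication by $x_{\tau}$ from $R_{A}/P$ to $R_{A}/I_{D,A}$ to $\gamma\colon C_{P}\to C'_{I}$ over $R_{A}$, and then embed these as $\alpha_{i}=\left(\begin{smallmatrix}\widetilde{\alpha}_{i}\\0\end{smallmatrix}\right)$ and $\beta_{i}=(0\;\;\gamma_{i-1})$. The block form of $a_{i}$ makes the chain-map identities for $\alpha,\beta$ collapse to those for $\widetilde{\alpha},\gamma$, which hold by construction, and $\beta_{i}\alpha_{i}=0$ then lets you take $h_{i}=0$. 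Your route avoids the ``examine the $z$-degree and discard the $z$-part'' adjustments entirely, and it makes the source of the vanishing blocks transparent (they are padding coming from the tensor decomposition). The paper's route is more hands-on and shows how little is actually needed from the chain maps, but it is also more opaque. One small bookkeeping point worth stating explicitly when writing this up: you must check (as you note at level $1$) that your $\beta$ with $\beta_{1}=(0,\dots,0,x_{\tau})$ is an admissible lift of $\psi$ in the sense required by the Kustin--Miller construction; this is exactly why you chose $\gamma$ to lift multiplication by $x_{\tau}$ rather than an arbitrary comparison map.
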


\begin{proof}
For the maps $\alpha_{i}$ the arguments are as follows. Since $\alpha_{0}$ is
an invertible element of $R$ it does not involve $z$. Assume now that $i=1$.
Using that $\alpha$ is a chain map, we have $\alpha_{0}b_{1}=a_{1}\alpha_{1}$,
hence
\[
\alpha_{0}b_{1}=%
\begin{pmatrix}
p_{1} & z
\end{pmatrix}%
\begin{pmatrix}
\alpha_{1,1}\\
\alpha_{1,2}%
\end{pmatrix}
=p_{1}\alpha_{1,1}+z\alpha_{1,2}\text{.}%
\]
Since $z$ does not appear in the product $\alpha_{0}b_{1}$ or in $p_{1}$ we
can assume $\alpha_{1,2}=0$ and that $z$ does not appear in $\alpha_{1,1}$.
Assume now that $\alpha_{i,2}=0$ and $\alpha_{i,1}$ does not involve the
variable $z$ and we will show that we can choose $\alpha_{i+1}$ with
$\alpha_{i+1,2}=0$ and that $z$ does not appear in $\alpha_{i+1,1}$. Indeed,
since $\alpha$ is a chain map, we have $\alpha_{i}b_{i+1}=a_{i+1}\alpha_{i+1}%
$, so
\[%
\begin{pmatrix}
\alpha_{i,1}\\
0
\end{pmatrix}
b_{i+1}=%
\begin{pmatrix}
p_{i+1} & -zE\\
0 & p_{i}%
\end{pmatrix}%
\begin{pmatrix}
\alpha_{i+1,1}\\
\alpha_{i+1,2}%
\end{pmatrix}
\text{.}%
\]
Hence we get the equations
\begin{equation}
\alpha_{i,1}b_{i+1}=p_{i+1}\alpha_{i+1,1}-z\alpha_{i+1,2},\quad0=p_{i}%
\alpha_{i+1,2}\text{.} \label{eqn!equationforaiplus1}%
\end{equation}
Write $\alpha_{i+1,1}=q_{1}+zq_{2}$ with $z$ not appearing in $q_{1}$.
Equation~(\ref{eqn!equationforaiplus1}) implies that $\alpha_{i,1}%
b_{i+1}=p_{i+1}q_{1}$. As a consequence, we can assume that $\alpha_{i+1,2}=0$
and that $\alpha_{i+1,1}=q_{1}$, hence $\alpha_{i+1,1}$ does not involve $z$.

For the maps $\beta_{i}$ the argument is as follows. Since $\psi(u)=0$ for all
$u\in P$ and $\psi(z)=x_{\tau}$, we have by \cite[Section~2]{BP3} that
$\beta_{1}=%
\begin{pmatrix}
0 & \dots & 0 & x_{\tau}%
\end{pmatrix}
$, hence $\beta_{1,1}=0$ and $z$ does not appear in $\beta_{1,2}$. Assume now
$\beta_{i,1}=0$ and $z$ does not appear in $\beta_{i,2}$ and we will show that
we can choose $\beta_{i+1}$ with $\beta_{i+1,1}=0$ and $z$ not appearing in
$\beta_{i+1,2}$. Indeed, since $\beta$ is a chain map, we have $b_{i}%
\beta_{i+1}=\beta_{i}a_{i+1}$, hence
\[
b_{i}%
\begin{pmatrix}
\beta_{i+1,1} & \beta_{i+1,2}%
\end{pmatrix}
=%
\begin{pmatrix}
0 & \beta_{i,2}%
\end{pmatrix}%
\begin{pmatrix}
p_{i+1} & -zE\\
0 & p_{i}%
\end{pmatrix}
\text{.}%
\]
Hence we get the equations
\[
b_{i}\beta_{i+1,1}=0,\quad\quad b_{i}\beta_{i+1,2}=\beta_{i,2}p_{i}%
\]
so we can assume that $\beta_{i+1,1}=0$ and that $z$ does not appear in
$\beta_{i+1,2}$.

We will now prove the statement for the maps $h_{i}$. Since, as proved above,
we can assume that $\alpha_{i,2}=\beta_{i,1}=0$, we have
\[
\beta_{i}\alpha_{i}=%
\begin{pmatrix}
0 & \beta_{i,2}%
\end{pmatrix}%
\begin{pmatrix}
\alpha_{i,1}\\
0
\end{pmatrix}
=0\text{.}%
\]
As a consequence, Equation~(\ref{equ homotopy}) can be satisfied by taking
$h_{i}=0$ for all $i$.
\end{proof}

In what follows, we will assume $\alpha_{i},\beta_{i}$ and $h_{i}$ are chosen
as in Proposition~\ref{prop!zeroForSomeComponents}.

\begin{proposition}
\label{prop!about_minimality_with_multidegrees} Assume that the face $\tau$ of
$D$ has the following property: every minimal non-face of $D$ contains at
least one vertex of $\tau$ (algebraically it means that for every minimal
monomial generator $v$ of $I$ there exists $p \in\tau$ such that $x_{p}$
divides $v$). Then $C_{U}$ is a minimal complex. As a consequence, we have
that $C_{U} \otimes_{R} R/(z)$ is, after substituting $T$ with $x_{j}$, the
minimal graded free resolution of $k[D_{\tau},2^{A \cup\{ j \}}]$.
\end{proposition}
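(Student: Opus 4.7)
The plan is to verify that every entry of every differential $\partial_i^U$ of the Kustin--Miller complex $C_U$ lies in the homogeneous maximal ideal $\mathfrak{m}_{R[T]}=(x_a\mid a\in A,\,z,\,T)$; this is equivalent to $C_U$ being minimal. I would begin by recalling from \cite[Section~2]{BP3} that the entries of the differentials of $C_U$ are polynomial combinations of the entries of $b_i$, $a_j$, the components of the chain maps $\alpha_k$ and $\beta_l$, the homotopy $h_m$, and the variable $T$, and that in the Kustin--Miller construction the $\alpha_k$-entries appear always multiplied by $T$.

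By Proposition~\ref{prop!zeroForSomeComponents} we fix $\alpha$, $\beta$, and $h$ so that $h_m = 0$ for all $m$, $\alpha_{i,2} = \beta_{i,1} = 0$, and no entry of $\alpha_i$ or $\beta_i$ involves $z$. With this choice, the entries of $b_i$ and $a_j$ lie in $\mathfrak{m}_R = (x_a, z) \subset \mathfrak{m}_{R[T]}$ by minimality of $C_I$ and $C_J$; the entries of $T\cdot\alpha_k$ lie in $(T) \subset \mathfrak{m}_{R[T]}$; and the $h_m$-contributions vanish. The only remaining task is to verify that every entry of $\beta_{l,2}$ lies in $\mathfrak{m}_{R[T]}$ for all $l$.

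To this end, I would pass to the multigrading on $R[T]$ given by $\deg x_a = e_a$ for $a \in A$, $\deg T = e_j$, and $\deg z = \sum_{a\in\tau}e_a - e_j$ in $\mathbb{Z}^{A\cup\{j\}}$, which makes the unprojection relations $Tz - x_\tau$ and $Tf$ (for $f$ a minimal generator of $P$), as well as the generators of $I$, multihomogeneous. The hypothesis that every minimal generator $u$ of $I$ is divisible by some $x_p$ with $p\in\tau$ says precisely that the multidegree of $u$ has a strictly positive component in at least one coordinate $e_p$ with $p\in\tau$. Starting from $\beta_{1,2}=x_\tau$, of multidegree $\sum_{a\in\tau}e_a$, and using inductively the chain-map relation $b_{l-1}\beta_{l,2}=\beta_{l-1,2}\,p_{l-1}$ that appears in the proof of Proposition~\ref{prop!zeroForSomeComponents}, I would show that every nonzero entry of $\beta_{l,2}$ has multidegree with a strictly positive component in some $e_p$, $p\in\tau$, and hence lies in $\mathfrak{m}_{R[T]}$. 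The hard part is precisely this inductive step, which requires a careful analysis of the possible multidegrees of basis elements of $P_l$ and $B_{l-1}$, and it is here that the hypothesis is used to exclude any unit (multidegree zero) solution of the chain-map equation.

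Combining the four cases, every entry of $\partial_i^U$ lies in $\mathfrak{m}_{R[T]}$, and so $C_U$ is a minimal graded free resolution of $S$. For the ``as a consequence'' clause, Proposition~\ref{prop!stellar_for_generalised} gives that $z$ is $S$-regular and $S/(z)\cong k[D_\tau,2^{A\cup\{j\}}]$ after $T\mapsto x_j$; since $z\in\mathfrak{m}_{R[T]}$ and $C_U$ is minimal, $C_U\otimes_R R/(z)$ is also minimal, yielding the desired minimal graded free resolution of $k[D_\tau,2^{A\cup\{j\}}]$.
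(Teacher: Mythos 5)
You correctly reduce the problem, after invoking Proposition~\ref{prop!zeroForSomeComponents}, to verifying that the remaining chain-map entries lie in the maximal ideal, and the idea of exploiting a multigrading is the right one. But the central step is left undone: you defer the induction on $l$ showing that every entry of $\beta_{l,2}$ has a positive $\tau$-component, explicitly calling it ``the hard part,'' and do not carry it out. This is precisely where the hypothesis must be used, so without it there is no proof. Note that the relation $b_{l-1}\beta_{l,2}=\beta_{l-1,2}\,p_{l-1}$ only constrains the product $b_{l-1}\beta_{l,2}$; a positive $\tau$-component on the right could a priori be contributed by $b_{l-1}$ rather than by $\beta_{l,2}$, and ruling that out is not immediate. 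The grading you chose is also a source of trouble: assigning $\deg z=\sum_{a\in\tau}e_a-e_j$ takes you out of the positive orthant, so the usual ``multidegrees only increase along a minimal resolution'' reasoning needs separate justification, and it is in any case not needed for the minimality question.

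The paper's argument is simpler and bypasses $\beta$ entirely. Use the ordinary $\mathbb{N}^{A\cup\{z\}}$-multigrading on $R=R_A[z]$, which is available because $I$ and $J$ are monomial. By the hypothesis every multidegree $\bar b_{1,j}$ of $B_1$ has a strictly positive coordinate at some $x_p$ with $p\in\tau$, and since in a minimal multigraded resolution of a monomial quotient multidegrees can only increase componentwise, the same holds for every $\bar b_{i,j}$ with $i\geq 1$. On the other side, no $x_p$ with $p\in\tau$ appears in any minimal generator of $J=(I:x_\tau,z)$, so every $\bar a_{i,j}$ of $A_i$ has zero coordinate at every $x_p$, $p\in\tau$. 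Hence $\bar a_{i,j_1}\neq\bar b_{i,j_2}$ always, and the degree-preserving chain map $\alpha_i\colon B_i\to A_i$ has no unit entries. Minimality of $\beta_i$ then follows at once from the duality in \cite{BP3} ($\beta_i$ minimal iff $\alpha_{g-i}$ minimal), with no induction along the chain maps. You should also double-check your assertion that ``the $\alpha_k$-entries appear always multiplied by $T$'': the paper treats $\alpha_i$ and $\beta_i$ symmetrically and requires both to be minimal, which suggests this assertion may not be safe to rely on.
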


\begin{proof}
We first show the minimality of $C_{U}$. Since we have $h_{i}=0$ for all $i$,
it is enough to show that, for $1\leq i\leq g-1$ the chain maps $\alpha_{i}$
and $\beta_{i}$ are minimal, in the sense that no nonzero constants appear in
the corresponding matrix representations. It follows by the defining
properties of the chain maps $\alpha$ and $\beta$ in \cite[Section~2]{BP3}
that $\beta_{i}$ is minimal if and only if $\alpha_{g-i}$ is. So it is enough
to prove that the map $\alpha_{i}\colon B_{i}\rightarrow A_{i}$ is minimal for
$1\leq i\leq g-1$. Denote by $M$ the monoid of exponent vectors on the
variables of $R$.

Since the ideals $I$ and $J$ of $R$ are monomial, there exist, for $1\leq
i\leq g-1$, positive integers $q_{1,i},q_{2,i}$ and multidegrees $\bar
{a}_{i,j_{1}},\bar{b}_{i,j_{2}}\in M$ with $1\leq j_{1}\leq q_{1,i}$ and
$1\leq j_{2}\leq q_{2,i}$ such that
\[
A_{i}=\bigoplus_{1\leq j_{1}\leq q_{1,i}}R(-\bar{a}_{i,j_{1}})\quad\text{ and
}\quad B_{i}=\bigoplus_{1\leq j_{2}\leq q_{2,i}}R(-\bar{b}_{i,j_{2}})\text{.}%
\]

For the minimality of $\alpha_{i}$ it is enough to show (compare
\cite[Remark~8.30]{MS}) that given $i$ with $1\leq i\leq g-1$ there are no
$j_{1},j_{2}$ with $1\leq j_{1}\leq q_{1,i}$, $1\leq j_{2}\leq q_{2,i}$ and
$\bar{a}_{i,j_{1}}=\bar{b}_{i,j_{2}}$, which we will now prove. By the
assumptions, given $v$ in the minimal monomial generating set of $I$ there
exists $p\in\tau$ with $x_{p}$ dividing $v$ in the polynomial ring $R$. Hence,
given $j_{2}$ with $1\leq j_{2}\leq q_{2,i}$ there is a nonzero coordinate of
$\bar{b}_{1,j_{2}}$ corresponding to a variable $x_{p}$ with $p\in\tau$. This
implies that the same is true for every $\bar{b}_{i,j_{2}}$ with $i\geq1$ and
$1\leq j_{2}\leq q_{2,i}$. On the other hand, no variable $x_{p}$ with
$p\in\tau$ appears in any minimal monomial generators of $J$, hence the same
is true for the coordinates of every $\bar{a}_{i,j_{1}}$ with $i\geq1$ and
$1\leq j_{1}\leq q_{1,i}$. So $\bar{a}_{i,j_{1}}=\bar{b}_{i,j_{2}}$ is
impossible for $i\geq1$. This finishes the proof that $C_{U}$ is a minimal complex.

By Proposition~\ref{prop!stellar_for_generalised} $z$ is $S$-regular and
$S/(z)\cong k[D_{\tau}]$. Hence using \cite[Proposition 1.1.5]{BH}, since
$C_{U}$ is minimal, the complex $C_{U}\otimes_{R}R/(z)$ is, after substituting
$T$ with $x_{j}$, the minimal graded free resolution of $k[D_{\tau}]$.
\end{proof}

\begin{remark}
\label{rem!minimalWithoutCondition} We give an example where the condition for
$\tau$ in the statement
Proposition~\ref{prop!about_minimality_with_multidegrees} is not satisfied but
$C_{U}$ is still minimal. Let $D$ be the simplicial complex triangulating the
$1$-dimensional sphere $S^{1}$ having $n$ vertices with $n\geq4$, and suppose
$\tau$ is a $1$-face of $D$. Since $n\geq4$ there exist minimal non-faces of
$D$ with vertex set disjoint from $\tau$. On the other hand $C_{U}$ is
minimal, see, for example, \cite[Section~5.2]{BP}.
\end{remark}

\section{Champions\label{sect!champions}}

\subsection{Construction}

Assume a positive integer $c\geq1$ is given. We will define a positive integer
$q$ and construct a simplicial complex $F_{c}\in\mathcal{D}_{q-1,c}$ such that
the inequalities of Theorem~\ref{thm!bettiBoundWeakVersion} are equalities.
First note that for $c=1$ we can take the boundary complex of any simplex, and
for $c=2$ any single stellar subdivision of that.

For $c\geq3$ we define inductively positive integers $d_{t}$, for $0\leq t\leq
c-1$, by $d_{0}=0$ and $d_{t+1}=d_{t}+(c-t)$, and set $q=d_{c-1}$. We also
define inductively, for $1\leq t\leq c-1$, subsets $\sigma_{t}\subset\lbrack
q]$ of cardinality $c$ by $\sigma_{1}=\{1,\dots,d_{1}=c\}$ and
\[
\sigma_{t+1}=\{(\sigma_{1})_{t},(\sigma_{2})_{t},\dots,(\sigma_{t})_{t}%
\}\cup\{i\bigm|d_{t}+1\leq i\leq d_{t+1}\}\text{,}%
\]
where $(\sigma_{i})_{p}$ denotes the $p$-th element of $\sigma_{i}$ with
respect to the usual ordering of $\mathbb{N}$. The main properties are that
$\#(\sigma_{i}\cap\sigma_{j})=1$ for all $i\not =j$, every three distinct
$\sigma_{i}$ have empty intersection, and the last element $d_{i}$ of
$\sigma_{i}$ is not in $\sigma_{j}$ for $j\not =i$.

\begin{example}
For $c=4$ we have $(d_{1},d_{2},d_{3})=(4,7,9)$, $q=9$, $\sigma_{1}%
=\{1,2,3,4\}$, $\sigma_{2}=\{1,5,6,7\}$ and $\sigma_{3}=\{2,5,8,9\}$. For
$c=5$ we have $(d_{1},\dots,d_{4})=(5,9,12,14)$, $q=14$, $\sigma
_{1}=\{1,2,3,4,5\}$, $\sigma_{2}=\{1,6,7,8,9\}$, $\sigma_{3}=\{2,6,10,11,12\}$
and $\sigma_{4}=\{3,7,10,13,14\}$. 

\end{example}

We define inductively simplicial subcomplexes $F_{t}\subset2^{[q+t-1]}$ for
$1\leq t\leq c$. Since $\sigma_{i}$ is not a subset of $\sigma_{j}$ for
$i\not =j$ we will be able to apply the elementary observation that if
$\sigma,\tau$ are two faces of a simplicial complex $D$ then $\tau$ not a
subset of $\sigma$ implies that $\sigma$ is also a face of the stellar
subdivision $D_{\tau}$. First set $F_{1}=\partial([q])\subset2^{[q]}$ to be
the boundary complex of the simplex on $q$ vertices $1,\dots,q$. Clearly
$\sigma_{i}$, for $1\leq i\leq c-1$, is a face of $F_{1}$. Set $F_{2}$ to be
the stellar subdivison of $F_{1}$ with respect to $\sigma_{1}$ with new vertex
$q+1$. Suppose $1\leq t\leq c-1$ and $F_{t}$ has been constructed. Since
$\sigma_{i}$ is a face of $F_{t}$ for $i\geq t$, we can continue inductively
and define $F_{t+1}$ to be the the stellar subdivision of $F_{t}$ with respect
to $\sigma_{t}$ with new vertex $q+t$.

The Stanley-Reisner ring of $F_{c}$ has the maximal possible Betti numbers
among all elements in $%
{\textstyle\bigcup\nolimits_{p\geq2}}
\mathcal{D}_{p,c}$:

\begin{proposition}
\label{prop!ChampionsProposition} For all $t$ with $1\leq t\leq c$ and all
$i\geq0$ we have
\[
b_{i}(R_{[q+t-1]}/I_{F_{t}})=l_{t,i}.
\]

\end{proposition}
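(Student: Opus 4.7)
The plan is to proceed by induction on $t$, with a strengthened simultaneous hypothesis so that the induction closes. For $1 \le t \le c-1$ I aim to prove:
\begin{enumerate}
\item[(a)] $b_i(k[F_t]) = l_{t,i}$ for all $i$;
\item[(b)] every minimal non-face of $F_t$ contains a vertex of $\sigma_s$ for each $t \le s \le c-1$;
\item[(c)] $b_i(k[\operatorname{lk}_{F_t}(\sigma_t)]) = l_{t,i}$ for all $i$.
\end{enumerate}
The proposition for $t=c$ then follows from one more application of the argument yielding (a). The base case $t=1$ is immediate: $F_1 = \partial([q])$ has unique minimal non-face $[q]$, which contains every $\sigma_s \subset [q]$; its Stanley-Reisner ideal $(x_1\cdots x_q)$ has Betti numbers $(1,1) = l_1$; and $\operatorname{lk}_{F_1}(\sigma_1) = \partial([q]\setminus\sigma_1)$ has the same form.

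In the inductive step, the key tool is Proposition~\ref{prop!about_minimality_with_multidegrees}. Claim (b) at level $t-1$ is precisely its hypothesis, so the Kustin--Miller complex associated with the stellar subdivision $F_t = (F_{t-1})_{\sigma_{t-1}}$ is the minimal graded free resolution of $k[F_t]$. Hence the Kustin--Miller rank formula (the same formula that gives the inequalities of Proposition~\ref{prop!bettiForStellarsVersion1}, now holding with equality) combined with (a) and (c) at level $t-1$ produces
\[
b_i(k[F_t]) = b_{i-1}(k[F_{t-1}]) + b_i(k[F_{t-1}]) + b_{i-1}(k[L']) + b_i(k[L']) = 2l_{t-1,i-1} + 2l_{t-1,i} = l_{t,i}
\]
for $2 \le i \le t-2$, where $L' = \operatorname{lk}_{F_{t-1}}(\sigma_{t-1})$, and analogously $b_1(k[F_t]) = 2l_{t-1,1}+1 = l_{t,1}$; the remaining values of $i$ follow from the Gorenstein symmetry $b_i = b_{t-i}$. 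This establishes (a) at level $t$.

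For (b) at level $t$, the minimal non-faces of $F_t$ split according to equation~\ref{eqn!egns_of_stellar} into three types: (i) surviving minimal non-faces of $F_{t-1}$, handled directly by (b) at level $t-1$; (ii) the face $\sigma_{t-1}$ itself, which contains $(\sigma_{t-1})_{s-1} \in \sigma_s$ for every $s \ge t$ by the very construction of $\sigma_s$; (iii) sets of the form $\{q+t-1\}\cup\nu$ with $\nu$ a minimal non-face of $L'$, which reduces to a parallel combinatorial claim about $L'$ that is tracked alongside (c).

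The main obstacle is (c). My plan is to apply Case~I of Lemma~\ref{Lem_stellar_link} iteratively: the pairwise-intersection properties ($|\sigma_i \cap \sigma_j|=1$, no triple intersection) allow one to verify inductively that $\sigma_s \cup \sigma_t$ is a face of $F_s$ for each $s < t$, so peeling off subdivisions one at a time identifies
\[
\operatorname{lk}_{F_t}(\sigma_t) = (\cdots((\partial([q]\setminus\sigma_t))_{\sigma_1 \setminus \sigma_t})\cdots)_{\sigma_{t-1}\setminus\sigma_t}.
\]
Since the subdividing faces $\sigma_s \setminus \sigma_t$ inherit the same structural features as the original $\sigma_i$, I would then run the entire argument (analogs of (a)--(c)) in parallel on this smaller configuration, in particular verifying the minimality hypothesis of Proposition~\ref{prop!about_minimality_with_multidegrees} at each intermediate stage. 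This nested induction is the delicate bookkeeping of the proof, and it is exactly what forces (a), (b), and (c) to be carried together.
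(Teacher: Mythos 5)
Your induction framework mirrors the paper's overall structure: their Lemma~\ref{prop!keyCombinatorialForChampions}(1) plays the role of your claim (b), and their Proposition~\ref{prop!keyPropositionForChampions}(2) plays the role of your claim (c). The genuine departure is in how you propose to prove (c). The paper never touches Lemma~\ref{Lem_stellar_link} here; instead it observes (Lemma~\ref{prop!keyCombinatorialForChampions}(2)) that the generating set $\frac{S_t}{x_{\sigma_t}}$ of the link ideal is obtained from $S_t$ by a sequence of monomial substitutions of the form $x_a \mapsto x_a x_b$ (a depolarization), which is a Betti-number-preserving operation (they cite \cite[Proposition 6.5]{BP2}). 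That single computation gives $b_i(\mathcal{T}/(I_{F_t}:x_{\sigma_t})) = b_i(\mathcal{T}/I_{F_t})$ directly, with no recursion.

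Your route through Case~I of Lemma~\ref{Lem_stellar_link} has a concrete gap. The assertion that $\sigma_s\cup\sigma_t$ is a face of $F_s$ for $s<t$ requires $|\sigma_s\cup\sigma_t|=2c-1<q=c(c+1)/2-1$, which fails for $c=3$ (there $q=5=2c-1$, so $\sigma_1\cup\sigma_2=[q]$ is not a face, and you land in Case~II instead of Case~I). Even for $c\ge 4$, where the peeling does identify $\operatorname{lk}_{F_t}(\sigma_t)$ with an iterated subdivision of $\partial([q]\setminus\sigma_t)$, the ``run the entire argument in parallel'' step is precisely where the difficulty lies and is left as a sketch: the new subdividing faces $\sigma_s\setminus\sigma_t$ have cardinality $c-1$ rather than $c$, their mutual intersection pattern relative to the shrunken vertex set $[q]\setminus\sigma_t$ is not the same as the original configuration, and you would need to re-establish both the KM-minimality hypothesis and the equality of link Betti numbers at every intermediate stage of this nested recursion, including a further layer for links of links. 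You correctly identify this as ``the delicate bookkeeping'' but do not carry it out, nor is it clear the recursion bottoms out with the claimed values. The paper's depolarization argument sidesteps all of this and also covers $c=3$ uniformly, which is why the paper takes that route.
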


We will give the proof in Subsection \ref{sec proof champion}.

\begin{remark}
Note that boundary complexes of stacked polytopes do not, in general, reach
the bounds.
\end{remark}

\begin{remark}
In the \textsc{Macaulay2} package \textsc{BettiBounds} \cite{BP4} we provide
an implementation of the construction of $F_{t}$. Using the minimality of the
Kustin-Miller complex, we also provide a function which produces their graded
Betti numbers. This works far beyond the range which is accessible by
computing the minimal free resolution via Gr\"{o}bner bases.
\end{remark}

\begin{example}
We use the implementation to produce $F_{4}$:\smallskip

\noindent%
\begin{tabular}
[c]{ll}%
\texttt{i1:} & \texttt{loadPackage "BettiBounds";}\smallskip
\end{tabular}

\noindent%
\begin{tabular}
[c]{ll}%
\texttt{i2:} & \texttt{F4 = champion 4;}%
\end{tabular}
\smallskip

\noindent%
\begin{tabular}
[c]{ll}%
\texttt{i3:} & \texttt{I4 = ideal F4}%
\end{tabular}
\smallskip

\noindent%
\begin{tabular}
[c]{ll}%
\texttt{o3:} & \texttt{ideal(x}$_{\mathtt{1}}$\texttt{x}$_{\mathtt{2}}%
$\texttt{x}$_{\mathtt{3}}$\texttt{x}$_{\mathtt{4}}$\texttt{,x}$_{\mathtt{1}}%
$\texttt{x}$_{\mathtt{5}}$\texttt{x}$_{\mathtt{6}}$\texttt{x}$_{\mathtt{7}}%
$\texttt{,x}$_{\mathtt{2}}$\texttt{x}$_{\mathtt{5}}$\texttt{x}$_{\mathtt{8}}%
$\texttt{x}$_{\mathtt{9}}$\texttt{,x}$_{\mathtt{5}}$\texttt{x}$_{\mathtt{6}}%
$\texttt{x}$_{\mathtt{7}}$\texttt{x}$_{\mathtt{8}}$\texttt{x}$_{\mathtt{9}}%
$\texttt{x}$_{\mathtt{10}}$\texttt{,x}$_{\mathtt{2}}$\texttt{x}$_{\mathtt{3}}%
$\texttt{x}$_{\mathtt{4}}$\texttt{x}$_{\mathtt{11}}$\texttt{,}\\
& \quad\quad\quad\texttt{x}$_{\mathtt{8}}$\texttt{x}$_{\mathtt{9}}$%
\texttt{x}$_{\mathtt{10}}$\texttt{x}$_{\mathtt{11}}$\texttt{,x}$_{\mathtt{1}}%
$\texttt{x}$_{\mathtt{3}}$\texttt{x}$_{\mathtt{4}}$\texttt{x}$_{\mathtt{12}}%
$\texttt{,x}$_{\mathtt{1}}$\texttt{x}$_{\mathtt{6}}$\texttt{x}$_{\mathtt{7}}%
$\texttt{x}$_{\mathtt{12}}$\texttt{, x}$_{\mathtt{6}}$\texttt{x}$_{\mathtt{7}%
}$\texttt{x}$_{\mathtt{10}}$\texttt{x}$_{\mathtt{12}}$\texttt{,x}%
$_{\mathtt{3}}$\texttt{x}$_{\mathtt{4}}$\texttt{x}$_{\mathtt{11}}$%
\texttt{x}$_{\mathtt{12}}$\texttt{,x}$_{\mathtt{10}}$\texttt{x}$_{\mathtt{11}%
}$\texttt{x}$_{\mathtt{12}}$\texttt{)}\smallskip
\end{tabular}

\noindent%
\begin{tabular}
[c]{ll}%
\texttt{i4:} & \texttt{betti res I4}%
\end{tabular}
\smallskip

\noindent%
\begin{tabular}
[c]{rrrrrrr}
&  & \texttt{0} & \texttt{1} & \texttt{2} & \texttt{3} & \texttt{4}\\
\texttt{o4:} & \texttt{total:} & \multicolumn{1}{l}{\texttt{1}} &
\multicolumn{1}{l}{\texttt{11}} & \multicolumn{1}{l}{\texttt{20}} &
\multicolumn{1}{l}{\texttt{11}} & \multicolumn{1}{l}{\texttt{1}}\\
& \texttt{0:} & \multicolumn{1}{l}{\texttt{1}} & \multicolumn{1}{l}{\texttt{.}%
} & \multicolumn{1}{l}{\texttt{.}} & \multicolumn{1}{l}{\texttt{.}} &
\multicolumn{1}{l}{\texttt{.}}\\
& \texttt{1:} & \multicolumn{1}{l}{\texttt{.}} & \multicolumn{1}{l}{\texttt{.}%
} & \multicolumn{1}{l}{\texttt{.}} & \multicolumn{1}{l}{\texttt{.}} &
\multicolumn{1}{l}{\texttt{.}}\\
& \texttt{2:} & \multicolumn{1}{l}{\texttt{.}} & \multicolumn{1}{l}{\texttt{1}%
} & \multicolumn{1}{l}{\texttt{.}} & \multicolumn{1}{l}{\texttt{.}} &
\multicolumn{1}{l}{\texttt{.}}\\
& \texttt{3:} & \multicolumn{1}{l}{\texttt{.}} & \multicolumn{1}{l}{\texttt{9}%
} & \multicolumn{1}{l}{\texttt{9}} & \multicolumn{1}{l}{\texttt{1}} &
\multicolumn{1}{l}{\texttt{.}}\\
& \texttt{4:} & \multicolumn{1}{l}{\texttt{.}} & \multicolumn{1}{l}{\texttt{.}%
} & \multicolumn{1}{l}{\texttt{2}} & \multicolumn{1}{l}{\texttt{.}} &
\multicolumn{1}{l}{\texttt{.}}\\
& \texttt{5:} & \multicolumn{1}{l}{\texttt{.}} & \multicolumn{1}{l}{\texttt{1}%
} & \multicolumn{1}{l}{\texttt{9}} & \multicolumn{1}{l}{\texttt{9}} &
\multicolumn{1}{l}{\texttt{.}}\\
& \texttt{6:} & \multicolumn{1}{l}{\texttt{.}} & \multicolumn{1}{l}{\texttt{.}%
} & \multicolumn{1}{l}{\texttt{.}} & \multicolumn{1}{l}{\texttt{1}} &
\multicolumn{1}{l}{\texttt{.}}\\
& \texttt{7:} & \multicolumn{1}{l}{\texttt{.}} & \multicolumn{1}{l}{\texttt{.}%
} & \multicolumn{1}{l}{\texttt{.}} & \multicolumn{1}{l}{\texttt{.}} &
\multicolumn{1}{l}{\texttt{.}}\\
& \texttt{8:} & \multicolumn{1}{l}{\texttt{.}} & \multicolumn{1}{l}{\texttt{.}%
} & \multicolumn{1}{l}{\texttt{.}} & \multicolumn{1}{l}{\texttt{.}} &
\multicolumn{1}{l}{\texttt{1}}%
\end{tabular}
\smallskip

The command \texttt{gradedBettiChampion 20}, will produce the Betti table of
the minimal free resolution of $I_{F_{20}}$ with projective dimension $20$ and
regularity $208$ in $0.7$ seconds\footnote{On a singe core of an Intel
i7-2640M at $3.4$ GHz.}. For more examples, see the documentation of
\textsc{BettiBounds}.
\end{example}

\subsection{Proof of Proposition \ref{prop!ChampionsProposition}%
\label{sec proof champion}}

The main idea of the proof is that when passing from $F_{t}$ to $F_{t+1}$ by
subdividing $\sigma_{t}$, the ideals $I_{F_{t}}$ and $(I_{F_{t}}:x_{\sigma
_{t}})$ have the same total Betti numbers (Proposition
\ref{prop!keyPropositionForChampions}) and the Kustin-Miller complex
construction yields a minimal free resolution (Lemma
\ref{prop!keyMinimalityOfKMComplexConstruction}).

It is convenient to introduce the following notations, which will be used only
in the present subsection. For nonzero monomials $v=\prod_{i=1}^{l}%
x_{i}^{a_{i}}$ and $w=\prod_{i=1}^{l}x_{i}^{b_{i}}$ in $R_{[l]}$ we set
\[
\frac{v}{w}=\prod_{i=1}^{l}x_{i}^{c_{i}},\quad\text{ with }c_{i}=\max
(a_{i}-b_{i},0)\text{,}%
\]
and for a set $S$ of monomials we set $\frac{S}{w}=\{\frac{v}{w}\bigm|v\in
S\}$. Clearly $\frac{v}{w}$ is the monomial generator of the ideal quotient
$((v):(w))$.

For simplicity of notation write $\mathcal{T}=R_{[q+t-1]}$. We will now study
in more detail the the Stanley--Reisner ideal $I_{F_{t}}\subset\mathcal{T}$ of
$F_{t}$. We set $u_{1}=\prod_{i=1}^{q}x_{i}$, $u_{2}=\frac{x_{q+1}u_{1}%
}{x_{\sigma_{1}}}$ and inductively define finite subsets $S_{t}\subset
I_{F_{t}}$ by $S_{1}=\{u_{1}\},S_{2}=\{u_{2},x_{\sigma_{1}}\}$ and, for
$t\geq2$,
\begin{equation}
S_{t+1}=S_{t}\;\cup\;\frac{x_{q+t}S_{t}}{x_{\sigma_{t}}}\;\cup\;\{x_{\sigma
_{t}}\}\text{.}\label{eqn!givingStplus1}%
\end{equation}
Clearly $S_{1}$ (resp. $S_{2}$) is the minimal monomial generating set of
$I_{F_{1}}$ (resp. $I_{F_{2}}$). Moreover, an easy induction on $t$ using
Equation~(\ref{eqn!egns_of_stellar}) shows that $S_{t}$ is a set of monomials
generating $I_{F_{t}}$ for all $1\leq t\leq c$. In
Proposition~\ref{prop!keyPropositionForChampions} we will show that $S_{t}$ is
actually the minimal monomial generating set of $I_{F_{t}}$ for all $t$.

Equation~(\ref{eqn!givingStplus1}) and induction imply that given an element
$v$ of $S_{t+1}$ there exists $e_{v}\in\{u_{2},x_{\sigma_{1}},\dots
,x_{\sigma_{t}}\}$ such that either $v=e_{v}$ or $v=\frac{w_{1}e_{v}}{w_{2}}$,
with $w_{1}=\prod_{j=1}^{l}x_{q+r_{j}}$ and $w_{2}=\prod_{j=1}^{l}%
x_{\sigma_{r_{j}}}$ for some $l\geq1$ and $r_{1}<r_{2}<\dots<r_{l}\leq t$.
Moreover, if $e_{v}=u_{2}$ we have $2\leq r_{1}$, while if $e_{v}%
=x_{\sigma_{p}}$ we have $p+1\leq r_{1}$. A priori $e_{v}$ may not be uniquely
determined and we fix one of them and call it the original source of $v$. One
can actually show that in our setting $e_{v}$ is uniquely determined by $v$
but we do not prove it and do not use it in the following.

\begin{example}
We have
\[
S_{3}=\left\{  u_{2},\text{ }\frac{x_{q+2}u_{2}}{x_{\sigma_{2}}}\right\}
\;\cup\;\left\{  x_{\sigma_{1}},\text{ }\frac{x_{q+2}x_{\sigma_{1}}}%
{x_{\sigma_{2}}}\right\}  \;\cup\;\left\{  x_{\sigma_{2}}\right\}
\]
and
\[
S_{4}=\left\{  u_{2},\frac{\text{ }x_{q+2}u_{2}}{x_{\sigma_{2}}},\text{ }%
\frac{x_{q+3}u_{2}}{x_{\sigma_{3}}},\text{ }\frac{x_{q+2}x_{q+3}u_{2}%
}{x_{\sigma_{2}}x_{\sigma_{3}}}\right\}  \;\cup
\]%
\[
\phantom{=======}\left\{  x_{\sigma_{1}},\text{ }\frac{x_{q+2}x_{\sigma_{1}}%
}{x_{\sigma_{2}}},\text{ }\frac{x_{q+3}x_{\sigma_{1}}}{x_{\sigma_{3}}},\text{
}\frac{x_{q+2}x_{q+3}x_{\sigma_{1}}}{x_{\sigma_{2}}x_{\sigma_{3}}}\right\}
\;\cup
\]%
\[
\left\{  x_{\sigma_{2}},\text{ }\frac{x_{q+3}x_{\sigma_{2}}}{x_{\sigma_{3}}%
}\right\}  \cup\left\{  x_{\sigma_{3}}\right\}  \text{.}%
\]

\end{example}

We now fix $t$ with $t\leq c-1$. Part (1) of the following combinatorial lemma
will be used in Lemma~\ref{prop!keyMinimalityOfKMComplexConstruction} for the
proof of the minimality of the Kustin--Miller complex construction, while part
(2) will be used in Proposition~\ref{prop!keyPropositionForChampions} for the
proof of the equality of the corresponding Betti numbers of $\mathcal{T}%
/I_{F_{t}}$ and $\mathcal{T}/(I_{F_{t}}:x_{\sigma_{t}})$.

\begin{lemma}
\label{prop!keyCombinatorialForChampions}

\begin{enumerate}
\item For every $v\in S_{t}$ there exists $a\in\sigma_{t}$ such that $x_{a}$
divides $v$.

\item We can recover $S_{t}$ from $\frac{S_{t}}{x_{\sigma_{t}}}$ in the
following way: $S_{t}$ is the set obtained from $\frac{S_{t}}{x_{\sigma_{t}}}$
by substituting, for $p=1,2,\dots,t-1$, the variable $x_{(\sigma_{p})_{t}}$ by
the product $\;x_{(\sigma_{p})_{t}}x_{(\sigma_{p})_{t}-1}$,
and substituting the variable $x_{d_{t}+1}$ by the product $\prod
_{r=d_{t-1}+1}^{d_{t}+1}x_{r}$.
\end{enumerate}
\end{lemma}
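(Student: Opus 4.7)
The plan is to first extract the arithmetic structure of the $\sigma_p$ and then do a case analysis on $v \in S_t$ based on its original source $e_v$. From the recursive definition one checks that the listing of $\sigma_p$ in the definition is already in increasing order, so that $(\sigma_p)_j = d_{p-1} + j - p + 1$ for $p \leq j \leq c$. In particular, for $p < t$ one has $(\sigma_p)_t - 1 = (\sigma_p)_{t-1}$, which is exactly the index shift appearing in the substitution of part (2). A further consequence is that, for $p' \neq p$, the unique element of $\sigma_p \cap \sigma_{p'}$ equals $(\sigma_p)_{p'}$ when $p' < p$ and $(\sigma_p)_{p'-1}$ when $p' > p$.

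For part (1), the case $t = 1$ is trivial since $u_1 = \prod_{i=1}^q x_i$. For $t \geq 2$, write $v = \frac{w_1 e_v}{w_2}$ with $e_v \in \{u_2, x_{\sigma_1}, \dots, x_{\sigma_{t-1}}\}$, $w_1 = \prod_j x_{q+r_j}$, $w_2 = \prod_j x_{\sigma_{r_j}}$, and $r_j \leq t - 1$. If $e_v = u_2$, I claim $x_{d_t} \mid v$: indeed $d_t \in \sigma_t$, and $d_t > c$ gives $d_t \notin \sigma_1$, so $x_{d_t} \mid u_2$; the ``last element'' property of the $\sigma_i$ then gives $d_t \notin \sigma_j$ for $j \neq t$, in particular $d_t \notin \sigma_{r_j}$. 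If $e_v = x_{\sigma_{p'}}$ with $p' < t$, the unique element $m \in \sigma_{p'} \cap \sigma_t$ lies in no $\sigma_{r_j}$ by the three-wise empty intersection property, hence $x_m \mid v$.

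For part (2), I would decompose $\sigma_t$ into its shared part $\{(\sigma_p)_{t-1} : 1 \leq p \leq t-1\}$ and its consecutive part $\{d_{t-1}+1, \dots, d_t\}$, and determine for each $a \in \sigma_t$ when $x_a \mid v$. Using the intersection formulas above, for any $p < t$ neither $(\sigma_p)_{t-1}$ nor $(\sigma_p)_t$ can lie in any $\sigma_{r_j}$ with $r_j \leq t - 1$, because the equalities of indices forcing such a containment all violate the range $r_j \leq t-1$. So the presence of $x_{(\sigma_p)_{t-1}}$ and $x_{(\sigma_p)_t}$ in $v$ depends only on $e_v$, and tracking this through the cases $e_v = u_2$ and $e_v = x_{\sigma_{p'}}$ gives $x_{(\sigma_p)_{t-1}} \mid v \Longleftrightarrow x_{(\sigma_p)_t} \mid v$, matching the substitution $x_{(\sigma_p)_t} \mapsto x_{(\sigma_p)_t}\, x_{(\sigma_p)_t - 1}$. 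A parallel computation for the consecutive part shows that each of $x_{d_{t-1}+1}, \dots, x_{d_t}, x_{d_t+1}$ divides $v$ precisely when $e_v = u_2$ (none of these indices lies in any $\sigma_{p'}$ with $p' \leq t-1$, as they strictly exceed $d_{p'}$), which is exactly what the substitution $x_{d_t+1} \mapsto \prod_{r=d_{t-1}+1}^{d_t+1} x_r$ records.

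The main obstacle I anticipate is the bookkeeping in part (2): verifying that each potential containment of a shifted index like $(\sigma_p)_t$ or $d_t + 1$ in one of the removed $\sigma_{r_j}$ translates to a numerical condition outside the allowed range $r_j \leq t - 1$. Organizing this cleanly, by fixing $a \in \sigma_t$ and ruling out $a \in \sigma_{r_j}$ uniformly in the construction of $v$, should keep the case analysis manageable; the arithmetic identities of the first paragraph are what make every such numerical condition fail.
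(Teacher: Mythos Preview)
Your overall strategy matches the paper's: write $v = \frac{w_1 e_v}{w_2}$ with $e_v \in \{u_2, x_{\sigma_1},\dots,x_{\sigma_{t-1}}\}$ and track which variables survive. Your arithmetic identities $(\sigma_p)_j = d_{p-1}+j-p+1$ and $(\sigma_p)_t - 1 = (\sigma_p)_{t-1}$ are correct and are exactly what the paper uses implicitly. Part~(1) is fine.

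There is, however, a genuine error in your argument for part~(2). You claim that for $p<t$ neither $(\sigma_p)_{t-1}$ nor $(\sigma_p)_t$ can lie in any $\sigma_{r_j}$ with $r_j \le t-1$, and conclude that the presence of $x_{(\sigma_p)_{t-1}}$ and $x_{(\sigma_p)_t}$ in $v$ depends only on $e_v$. This is false: both $(\sigma_p)_{t-1}$ and $(\sigma_p)_t$ lie in $\sigma_p$ itself, and $p\le t-1$, so $r_j = p$ is allowed. Concretely, if $e_v = u_2$ and some $r_j = p$ (which happens, e.g., for $v = \frac{x_{q+p}\,u_2}{x_{\sigma_p}}$ with $p\ge 2$), then $x_{(\sigma_p)_{t-1}}$ and $x_{(\sigma_p)_t}$ are both removed by $w_2$, so neither divides $v$ even though both divide $u_2$. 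Your intersection formulas only handle $\sigma_p \cap \sigma_{p'}$ for $p'\neq p$, which is why the case $r_j = p$ slipped through.

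The fix is small and your biconditional survives: since $(\sigma_p)_{t-1}$ and $(\sigma_p)_t$ each lie in $\sigma_p$ and in exactly one other $\sigma_s$ (namely $s=t$ and $s=t+1$, both outside the range $r_j\le t-1$), one has $x_{(\sigma_p)_{t-1}} \mid w_2 \iff p\in\{r_j\} \iff x_{(\sigma_p)_t}\mid w_2$. Combined with your (correct) case analysis on $e_v$, this gives $x_{(\sigma_p)_{t-1}}\mid v \iff x_{(\sigma_p)_t}\mid v$. The paper's own proof, incidentally, writes out only the forward implication explicitly; your biconditional formulation is the right target and, once the $r_j=p$ case is handled, your argument is complete.
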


\begin{proof}
Let $v\in S_{t}$ and consider the original source $e_{v}\in\{u_{2}%
,x_{\sigma_{1}},\dots,x_{\sigma_{t-1}}\}$ of $v$. Write
\begin{equation}
v=\frac{w_{1}e_{v}}{w_{2}}\text{,} \label{eqn!expressionForv}%
\end{equation}
with either $(w_{1}=w_{2}=1)$ or $w_{1}=\prod_{j=1}^{l}x_{q+r_{j}}$ and
$w_{2}=\prod_{j=1}^{l}x_{\sigma_{r_{j}}}$ for some $l\geq1$ and $r_{1}%
<r_{2}<\dots<r_{l}\leq t-1$. Moreover, if $e_{v}=u_{2}$ we have $2\leq r_{1}$,
while if $e_{v}=x_{\sigma_{p}}$ we have $p+1\leq r_{1}$.

We first prove (1). If $e_{v}=u_{2}$, we set $a=d_{t}\in\sigma_{t}$ and
observe that $x_{a}$ divides $e_{v}$. Since $d_{t}$ is not in any $\sigma_{i}$
for $i<t$ we have that $x_{a}$ does not divide $w_{2}$, hence it follows by
(\ref{eqn!expressionForv}) that $x_{a}$ divides $v$. Assume now that
$e_{v}=\sigma_{p}$ for some $p$ with $1\leq p\leq t-1$. We set $a=(\sigma
_{p})_{t-1}$. By the definition of the sets $\sigma_{r}$, we have that $a$ is
in the intersection of $\sigma_{p}$ with $\sigma_{t}$ and in no other
$\sigma_{r}$. Hence $x_{a}$ divides $e_{v}$ but not $w_{2}$, hence it follows
by (\ref{eqn!expressionForv}) that $x_{a}$ divides $v$.

We will now prove (2). We first fix $p\in\{1,2,\dots,t-1\}$, set
$m=(\sigma_{p})_{t}$, assume $x_{m}$ divides $v$, and prove that $x_{m-1}$
also divides $v$. The assumption that $x_{m}$ divides $v$ implies that, when
$v\not =e_{v}$, in the expression (\ref{eqn!expressionForv}) we have
$r_{i}\not =p$ for $1\leq i\leq l$. Taking into account that $m$ is not in
$\sigma_{j}$ for $1\leq j\leq t-1$ and $j\not =p$ we get that $e_{v}=u_{2}$ or
$e_{v}=\sigma_{p}$. Since $p<t$ we have $m-1=(\sigma_{p})_{t-1}$. This,
together with $e_{v}\in\{u_{2},\sigma_{p}\}$ implies that $x_{m-1}$ divides
$e_{v}$. It also implies that $m-1$ is not in any $\sigma_{j}$ for $1\leq
j\leq t-1$ and $j\not =p$. Hence, $x_{m-1}$ does not divide $w_{2}$ and since
it divides $e_{v}$ if follows from (\ref{eqn!expressionForv}) that it also
divides $v$.

We now assume $x_{d_{t}+1}$ divides $v$ and will show that $\prod
_{r=d_{t-1}+1}^{d_{t}+1}x_{r}$ also divides $v$. Since $d_{t}+1$ is not in
$\sigma_{i}$ for $1\leq i\leq t-1$, we have that $e_{v}=u_{2}$. Fix $r$ with
$d_{t-1}+1\leq r\leq{d_{t}}$. Then $r$ is not an element of $\sigma_{j}$ for
$1\leq j\leq t-1$. Hence $x_{r}$ does not divide $w_{2}$ and since it divides
$u_{2}$ if follows from (\ref{eqn!expressionForv}) that it also divides $v$.
Taking into account that $m$ and $d_{t}+1$ are not in $\sigma_{t}$, this
completes the proof of (2).
\end{proof}

\begin{lemma}
\label{prop!keyMinimalityOfKMComplexConstruction} Fix $t$ with $2\leq t\leq
c-1$. Then the Kustin--Miller complex construction related to the unprojection
pair $(I_{F_{t}}:x_{\sigma_{t}},z)\subset\mathcal{T}[z]/(I_{F_{t}})$ and using
as initial data the minimal graded free resolutions of $\mathcal{T}%
[z]/(I_{F_{t}})$ and $\mathcal{T}[z]/(I_{F_{t}}:x_{\sigma_{t}},z)$ gives a
minimal complex.
\end{lemma}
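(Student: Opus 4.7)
The plan is to reduce the lemma to Proposition~\ref{prop!about_minimality_with_multidegrees}, which furnishes a sufficient combinatorial condition for a stellar Kustin--Miller complex to be minimal: every minimal monomial generator of the Stanley--Reisner ideal of the complex being subdivided must be divisible by $x_a$ for at least one vertex $a$ of the subdivided face. I would invoke that proposition with $D = F_t$ and $\tau = \sigma_t$. The input hypotheses are in place, since $F_t \in \mathcal{D}_{q-1,t}$ is Gorenstein* (hence in particular generalized Gorenstein*), and $\sigma_t$ has cardinality $c \geq 3$ and is therefore a face of positive dimension.

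To verify the combinatorial hypothesis I would use the monomial set $S_t$ defined by the recursion (\ref{eqn!givingStplus1}). The text has already observed that iterated application of Equation~(\ref{eqn!egns_of_stellar}) shows $S_t$ to be a generating set (over $\mathcal{T}$) of the Stanley--Reisner ideal $I_{F_t}$. For a monomial ideal any monomial generating set contains the minimal monomial generating set, so the minimal monomial generators of $I_{F_t}$ form a subset of $S_t$. The required divisibility condition --- each minimal monomial generator of $I_{F_t}$ is divisible by $x_a$ for some $a \in \sigma_t$ --- is then immediate from Lemma~\ref{prop!keyCombinatorialForChampions}(1), which asserts exactly this for every element of $S_t$. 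Applying Proposition~\ref{prop!about_minimality_with_multidegrees} completes the proof.

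In this sense the lemma is essentially a packaging statement: the real combinatorial content is absorbed into Lemma~\ref{prop!keyCombinatorialForChampions}(1), whose argument tracks the ``original source'' $e_v$ of each element $v \in S_t$ and exploits the carefully designed intersection pattern of the $\sigma_i$'s --- in particular that $(\sigma_p)_{t-1}$ lies in $\sigma_p \cap \sigma_t$ and in no other $\sigma_j$, and that $d_t \in \sigma_t$ avoids every earlier $\sigma_j$. Consequently I do not anticipate any genuine obstacle in the present proof: once Proposition~\ref{prop!about_minimality_with_multidegrees} and Lemma~\ref{prop!keyCombinatorialForChampions}(1) are in hand, the minimality of the Kustin--Miller complex at each stage of the champion construction is a one-line consequence, and nothing further about $F_t$ or the internal structure of the Kustin--Miller construction is needed.
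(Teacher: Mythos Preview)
Your proposal is correct and follows essentially the same route as the paper's proof: both observe that the minimal monomial generating set of $I_{F_t}$ is a subset of $S_t$, invoke Lemma~\ref{prop!keyCombinatorialForChampions}(1) to obtain the required divisibility by some $x_a$ with $a\in\sigma_t$, and then apply Proposition~\ref{prop!about_minimality_with_multidegrees}. Your additional remarks verifying the standing hypotheses (Gorenstein* and $\dim\sigma_t\geq 1$) and your commentary on where the combinatorial work is located are accurate and add nothing incompatible with the paper's argument.
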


\begin{proof}
The minimal monomial generating set of $I_{F_{t}}$ is a subset, say $\tilde
{S}_{t}$, of $S_{t}$. By part (1) of
Lemma~\ref{prop!keyCombinatorialForChampions} given $v\in\tilde{S}_{t}$, there
is an $a\in\sigma_{t}$ with $x_{a}$ dividing $v$. As a consequence, the result
follows from Proposition~\ref{prop!about_minimality_with_multidegrees}.
\end{proof}

\begin{proposition}
\label{prop!keyPropositionForChampions} Fix $t$ with $2 \leq t \leq c$. Then

\begin{enumerate}
\item The set $S_{t}$ is the minimal monomial generating set of $I_{F_{t}}$.

\item The corresponding Betti numbers of $\mathcal{T}/I_{F_{t}}$ and
$\mathcal{T}/(I_{F_{t}}:x_{\sigma_{t}})$ are equal, that is
\[
b_{i}(\mathcal{T}/(I_{F_{t}}:x_{\sigma_{t}}))=b_{i}(\mathcal{T}/I_{F_{t}})
\]
for all $i$. In particular, the set $\frac{S_{t}}{x_{\sigma_{t}}}$ has the
same cardinality as $S_{t}$ and is the minimal monomial generating set of
$(I_{F_{t}}:x_{\sigma_{t}})$.
\end{enumerate}
\end{proposition}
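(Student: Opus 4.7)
The plan is induction on $t$, with base case $t = 2$, using Lemma~\ref{prop!keyMinimalityOfKMComplexConstruction} (minimality of the Kustin--Miller complex) and Lemma~\ref{prop!keyCombinatorialForChampions} as the key inputs. For the base case $t = 2$, the set $S_2 = \{u_2, x_{\sigma_1}\}$ consists of two monomials with disjoint supports ($\{q+1\} \cup ([q] \setminus \sigma_1)$ versus $\sigma_1$), so it is trivially a minimal generating set of $I_{F_2}$. The analogous direct check handles $\frac{S_2}{x_{\sigma_2}}$, and both $I_{F_2}$ and $(I_{F_2}:x_{\sigma_2})$ are complete intersections with total Betti numbers $(1,2,1)$, verifying (2).

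For the inductive step from $t$ to $t+1$ with $t\leq c-1$, I first establish $|S_{t+1}| = 2|S_t|+1$: the three pieces $S_t$, $\frac{x_{q+t}S_t}{x_{\sigma_t}}$, and $\{x_{\sigma_t}\}$ in the recursion are pairwise disjoint (only the middle piece contains $x_{q+t}$, and $\sigma_t$ is a face of $F_t$, so $x_{\sigma_t}\notin I_{F_t}\supset S_t$), and the inductive hypothesis (2) makes $v\mapsto v/x_{\sigma_t}$ a bijection on $S_t$. Since $S_{t+1}$ generates $I_{F_{t+1}}$ by Equation~(\ref{eqn!egns_of_stellar}), it suffices to prove $b_1(\mathcal{T}/I_{F_{t+1}}) = 2|S_t|+1$. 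By Lemma~\ref{prop!keyMinimalityOfKMComplexConstruction} the Kustin--Miller complex $C_U$ is minimal, and by Proposition~\ref{prop!about_minimality_with_multidegrees} the complex $C_U\otimes R/(z)$, with $T$ replaced by $x_{q+t}$, is the minimal resolution of $k[F_{t+1}]$. The rank of the first term of $C_U$ equals the number of defining equations of the unprojection ideal $U$, namely the $|S_t|$ generators inherited from $I_{F_t}$ together with $|S_t|+1$ relations $Tm-\psi(m)$ indexed by the minimal generators of $Q=((I_{F_t}:x_{\sigma_t}),z)$; this yields $b_1(k[F_{t+1}])=2|S_t|+1=|S_{t+1}|$, completing (1) at $t+1$.

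For (2) at $t+1$, the ``in particular'' clause follows from Lemma~\ref{prop!keyCombinatorialForChampions}(2) applied with $t$ replaced by $t+1$: this realizes $S_{t+1}$ as the image of $\frac{S_{t+1}}{x_{\sigma_{t+1}}}$ under a monomial substitution inverse to $v\mapsto v/x_{\sigma_{t+1}}$, so the latter map is bijective and we obtain $|\frac{S_{t+1}}{x_{\sigma_{t+1}}}|=|S_{t+1}|$; standard colon arithmetic shows $\frac{S_{t+1}}{x_{\sigma_{t+1}}}$ generates $(I_{F_{t+1}}:x_{\sigma_{t+1}})$; and minimality follows since the substitution preserves divisibility, so a nontrivial relation $w_1\mid w_2$ in $\frac{S_{t+1}}{x_{\sigma_{t+1}}}$ would lift to $v_1\mid v_2$ in $S_{t+1}$, contradicting part (1) at $t+1$. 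For the full Betti equality $b_i(\mathcal{T}/I_{F_{t+1}}) = b_i(\mathcal{T}/(I_{F_{t+1}}:x_{\sigma_{t+1}}))$ at all $i$, I identify the right-hand side with $b_i(k[L_{t+1}])$ for $L_{t+1}=\operatorname{lk}_{F_{t+1}}(\sigma_{t+1})$ via the decomposition $k[\operatorname{star}_{F_{t+1}}(\sigma_{t+1})]=R_{\sigma_{t+1}}\otimes k[L_{t+1}]$; iterated application of Case I of Lemma~\ref{Lem_stellar_link} presents $L_{t+1}$ as iterated stellar subdivisions of $\partial([q]\setminus\sigma_{t+1})$ in a pattern parallel to the construction of $F_{t+1}$, and the same Kustin--Miller minimality argument applied to this parallel construction yields $b_i(k[L_{t+1}])=b_i(k[F_{t+1}])$ inductively.

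The main obstacle is pinning down the exact rank $b_1(C_U)=2|S_t|+1$ from the Kustin--Miller construction, and, for the Betti equality at all $i$, verifying rigorously that $L_{t+1}$ realizes a parallel ``champion''-type construction so its KM data numerically match those of $F_{t+1}$. Both rest on Lemma~\ref{prop!keyMinimalityOfKMComplexConstruction}, whose hypothesis (every minimal non-face of $F_t$ meets $\sigma_t$) is exactly the content of Lemma~\ref{prop!keyCombinatorialForChampions}(1), so the two halves of the induction are tightly coupled and must be handled in lockstep.
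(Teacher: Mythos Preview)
Your treatment of part~(1) and of the ``in particular'' clause in part~(2) is essentially the paper's argument: the minimality of the Kustin--Miller complex from Lemma~\ref{prop!keyMinimalityOfKMComplexConstruction} forces $b_1(k[F_{t+1}])=2|S_t|+1=|S_{t+1}|$, and Lemma~\ref{prop!keyCombinatorialForChampions}(2) gives the bijection $S_{t+1}\leftrightarrow \frac{S_{t+1}}{x_{\sigma_{t+1}}}$.

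Where you diverge from the paper is in the full Betti equality $b_i(\mathcal{T}/I_{F_{t+1}})=b_i(\mathcal{T}/(I_{F_{t+1}}:x_{\sigma_{t+1}}))$, and here there is a real gap. Your plan is to realise $L_{t+1}=\operatorname{lk}_{F_{t+1}}\sigma_{t+1}$ as an iterated stellar subdivision of $\partial([q]\setminus\sigma_{t+1})$ via repeated applications of Case~I of Lemma~\ref{Lem_stellar_link}, and then run the Kustin--Miller argument on this parallel tower. But that requires (a) checking the Case~I hypothesis $\sigma_i\cup\sigma_{t+1}\in F_i$ at every unwinding step, (b) verifying the analogue of Lemma~\ref{prop!keyCombinatorialForChampions}(1) for the faces $\sigma_i\setminus\sigma_{t+1}$ inside the link tower so that those Kustin--Miller complexes are also minimal, and (c) knowing that in the link tower the colon ideals again have the same Betti numbers as the ideals themselves---which is precisely the statement under proof, now for a different family, so you would need a strictly larger simultaneous induction. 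None of (a)--(c) is carried out, and you yourself flag this as ``the main obstacle''.

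The paper avoids all of this by using Lemma~\ref{prop!keyCombinatorialForChampions}(2) not just for the bijection but for the \emph{form} of the substitution. The variables multiplied in---namely $x_{(\sigma_p)_t}$ for $p\le t$ and $x_r$ for $d_t+1\le r\le d_{t+1}$---all lie in $\sigma_{t+1}$, hence do not occur in $\frac{S_{t+1}}{x_{\sigma_{t+1}}}$ at all. A substitution replacing a variable by its product with fresh variables preserves all total Betti numbers (one passes back by going modulo a regular sequence of linear binomials; this is the argument of \cite[Proposition~6.5]{BP2}). That single observation yields $b_i(\mathcal{T}/(I_{F_{t+1}}:x_{\sigma_{t+1}}))=b_i(\mathcal{T}/I_{F_{t+1}})$ for every $i$ directly, with no recursion on links.
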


\begin{proof}
We use induction on $t$. For $t=2$ we have that both $I_{F_{t}}$ and
$(I_{F_{t}}:x_{\sigma_{t}})$ are codimension $2$ complete intersections, so
both (1) and (2) are obvious. Assume that (1) and (2) are true for a value
$t<c-1$ and we will show that they are true also for the value $t+1$. By
Lemma~\ref{prop!keyMinimalityOfKMComplexConstruction} the Kustin--Miller
complex construction related to the unprojection pair $(I_{F_{t}}%
:x_{\sigma_{t}},z)\subset\mathcal{T}[z]/I_{F_{t}}$ and using as input data the
minimal graded free resolutions of $\mathcal{T}[z]/I_{F_{t}}$ and
$\mathcal{T}[z]/(I_{F_{t}}:x_{\sigma_{t}},z)$ gives a minimal complex. In
particular, this implies that $S_{t+1}$ is the minimal monomial generating set
of $I_{F_{t+1}}$.

We now look more carefully the substitutions in part (2) of
Lemma~\ref{prop!keyCombinatorialForChampions}. Assume $p\leq t$ and set
$m=(\sigma_{p})_{t+1}$. Since $p<t+1$ we have by the construction of
$\sigma_{p}$ that $m-1=(\sigma_{p})_{t}$, so $m-1$ is an element of
$\sigma_{t+1}$. Consequently $x_{m-1}$ does not appear as variable in
$\frac{S_{t+1}}{x_{\sigma_{t+1}}}$. Similarly, for each $r$ with $d_{t}+1\leq
r\leq d_{t+1}$ we have $r\in\sigma_{t+1}$, so $x_{r}$ does not appear as
variable in $\frac{S_{t+1}}{x_{\sigma_{t+1}}}$. Using these facts, the
equality of Betti numbers in part (2) follows by arguing as in the proof of
\cite[Proposition 6.5]{BP2}. Since we have shown that $S_{t+1}$ is the minimal
monomial generating set of $I_{F_{t+1}}$, and $\frac{S_{t+1}}{x_{\sigma_{t+1}%
}}$ contains the minimal monomial generating set of $(I_{F_{t}}:x_{\sigma_{t}%
})$, the equality of Betti numbers we just showed implies, for $i=1$, that
$\frac{S_{t+1}}{x_{\sigma_{t+1}}}$ has the same cardinality as $S_{t+1}$ and
is the minimal monomial generating set of the ideal $(I_{F_{t+1}}%
:x_{\sigma_{t+1}})$.
\end{proof}

We now give the proof of Proposition \ref{prop!ChampionsProposition}.

\begin{proof}
The proof is by induction on $t$. For $t=1,2$ the result is clear. Assume that
the result is true for some value $2\leq t\leq c-1$ and we will show it is
true for $t+1$. We set for simplicity $A_{1}=\mathcal{T}/I_{F_{t}}$ and
$A_{2}=\mathcal{T}/(I_{F_{t}}:x_{\sigma})$.

By the inductive hypothesis $b_{i}(A_{1})=l_{t,i}$ and by part (2) of
Proposition~\ref{prop!keyPropositionForChampions} $b_{i}(A_{2})=b_{i}(A_{1})$,
hence $b_{i}(A_{2})=b_{i}(A_{1})=l_{t,i}$ for all $i$. Since by
Lemma~\ref{prop!keyMinimalityOfKMComplexConstruction} the corresponding
Kustin--Miller construction is minimal, we get that
\[
b_{1}(R_{[q+t]}/I_{F_{t+1}})=b_{1}(A_{1})+b_{1}(A_{2})+1=2l_{t,1}+1=l_{t+1,1}%
\]
and that for $i$ with $2\leq i\leq\operatorname{codim}R_{[q+t]}/(I_{F_{t+1}%
})-2$
\begin{align*}
b_{i}(R_{[q+t]}/I_{F_{t+1}})  &  =b_{i-1}(A_{1})+b_{i}(A_{1})+b_{i-1}%
(A_{2})+b_{i}(A_{2})\\
&  =2l_{t,i-1}+2l_{t,i}=l_{t+1,i}%
\end{align*}
which finishes the proof.
\end{proof}

\emph{Acknowledgements.} The authors would like to thank an anonymous referee
of \cite{BP} for suggesting to do research in this direction, and
Micha\l \ Adamaszek, J\"{u}rgen Herzog and Diane Maclagan for helpful comments.

\end{document}